\newtheorem{thm}{Theorem}[section]
\newtheorem{lemma}[thm]{Lemma}
\newtheorem{cor}[thm]{Corollary}
\long\def\@makecaption#1#2{%
 \vskip\abovecaptionskip
  \sbox\@tempboxa{{#1.}\quad #2}%
 \ifdim \wd\@tempboxa >\hsize
    { #1.}\quad #2\par
     \else
  \global \@minipagefalse
   \hb@xt@\hsize{\hfil\box\@tempboxa\hfil}%
   \fi
   \vskip\belowcaptionskip}
\title{{Fullerenes with the maximum Clar number\thanks{This
work is supported by NSFC (Grant Nos.11371180 and 11401279).}}}
\author{Yang Gao, Qiuli Li, Heping Zhang\footnote{Corresponding author.}
\\\small{School of Mathematics and Statistics, Lanzhou
University, Lanzhou, Gansu 730000, China}
\\\small{E-mail addresses:  gaoy12@lzu.edu.cn, qlli@lzu.edu.cn, zhanghp@lzu.edu.cn}}
\date{}
\begin{document}
\maketitle
\makeatletter
\newcommand{\rmnum}[1]{\romannumeral #1} ¡¡¡¡
\newcommand{\Rmnum}[1]{\expandafter\@slowromancap\romannumeral #1@}
\makeatother

\begin{abstract}
 The Clar number of a fullerene is the maximum number of independent resonant hexagons in the fullerene. It is known that the
Clar number
 of a
fullerene with $n$ vertices is bounded above by $\lfloor n/6\rfloor-2$.
 We find that there are no fullerenes with $n\equiv 2\pmod 6$ vertices
 attaining this bound. In other words, the Clar number for a fullerene with $n\equiv 2\pmod 6$
vertices is bounded above by $\lfloor n/6\rfloor-3$. Moreover, we show that two
experimentally produced fullerenes C$_{80}$:1 (D$_{5d}$) and
C$_{80}$:2 (D$_{2}$) attain this bound. Finally, we present a
graph-theoretical characterization for fullerenes, whose order $n$ is congruent
to 2 (respectively, 4) modulo 6, achieving the maximum Clar number
$\lfloor n/6\rfloor-3$ (respectively,
$\lfloor n/6\rfloor-2$).

\medskip
\noindent {\bf Keywords:} Fullerene; Clar number; Clar structure; $M$-associated graph

\noindent{\bf AMS subject classification 2010:} 05C10, 05C62, 05C90
\end{abstract}


\section{Introduction}

Clar number is a stability predictor of the benzenoid
hydrocarbon isomers. The concept of Clar number  originates from
the Clar's sextet theory \cite{Clar1972} and Randi\'{c}'s conjugated circuits model \cite{Randic2003}. For any two isomeric benzenoid hydrocarbons,
the one with larger Clar number
 is more stable \cite{Clar1972,King1993}. Hansen and Zheng \cite{Hansen1994} reduced the Clar number problem of benzenoid hydrocarbons to an integer linear programming. Based on abundant computation, the same  authors conjectured the linear programming relaxing is sufficient.
The conjecture was confirmed by  Abeledo and Atkinson \cite{Abeledo2007}.

A \textsf{fullerene} is defined as a finite,
trivalent plane graph consisting solely of pentagons and hexagons. Gr\"{u}nbaum  and Motzkin \cite{Grunbaum1963} showed that fullerene isomer with $n$ atoms exists for $n=20$ and for all even  $n>22$.
To analyze the performance of the Clar number as a stability predictor of the fullerene isomers, we need good
upper bounds on the Clar number of fullerenes. Fortunately,
Zhang and Ye  \cite{Zhang2007} established an upper bound of the Clar number of
fullerenes. An alternative proof was given by Hartuny
\cite{Hartung2013}.

\begin{thm} {\em \cite{Zhang2007}}\label{Clar}
Let $F$ be a fullerene with $n$ vertices. Then
$c(F)\leq\lfloor n/6\rfloor-2$.
\end{thm}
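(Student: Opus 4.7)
The plan is to reduce the theorem to showing $|U|\geq 12$, where $U=V(F)\setminus V(\mathcal{H})$ for a maximum resonant hexagon set $\mathcal{H}$. Once this is in hand, $6c(F)=n-|U|\leq n-12$ and integrality of $c(F)$ yield $c(F)\leq\lfloor(n-12)/6\rfloor=\lfloor n/6\rfloor-2$.

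So I fix a Clar set $\mathcal{H}=\{H_1,\dots,H_c\}$ with $c=c(F)$ together with a perfect matching $M$ of $F$ for which each $H_i$ is $M$-alternating; the three $M$-edges inside $H_i$ then match all six vertices of $H_i$ inside $H_i$. Because $F$ is cubic, any two hexagons of $\mathcal{H}$ sharing a vertex would have to share two consecutive edges and hence coincide, so the hexagons of $\mathcal{H}$ are vertex-disjoint and $|V(\mathcal{H})|=6c$. I will also use the standard Euler-formula fact that $F$ has exactly $12$ pentagons and $n/2-10$ hexagons.

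Next comes a local obstruction at each pentagon. Let $P=v_1v_2v_3v_4v_5$ be any pentagon and $F_i$ the other face on the edge $v_iv_{i+1}$ (indices mod $5$); by $3$-connectivity of $F$ the faces $F_1,\dots,F_5$ are distinct. Each vertex $v_i$ lies in exactly the three faces $P,F_{i-1},F_i$, and $P\notin\mathcal{H}$ since $\mathcal{H}$ consists of hexagons; so $v_i\in V(\mathcal{H})$ forces $F_{i-1}\in\mathcal{H}$ or $F_i\in\mathcal{H}$. Consecutive $F_i,F_{i+1}$ share $v_{i+1}$, so disjointness of $\mathcal{H}$ makes $\{i:F_i\in\mathcal{H}\}$ an independent set in the $5$-cycle on the $F_i$'s, hence of size at most $2$. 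A brief case analysis on the three possibilities $|S|\in\{0,1,2\}$ then shows that at least one vertex of $P$ avoids $V(\mathcal{H})$.

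The main obstacle is converting this local fact into the global inequality $|U|\geq 12$, because a naive sum over the $12$ pentagons only gives $|U|\geq 4$ (a vertex can lie in as many as three pentagons). The approach I would pursue is to apply Euler's formula to the plane subgraph $F'=F-V(\mathcal{H})$: each $H_i$ becomes a hexagonal ``hole'' face of $F'$, the pentagons of $F$ contribute faces of length at least $5$ in $F'$, and $M|_{F'}$ is a perfect matching of $F'$. Tracking the vertex, edge, and face counts together with these face-length lower bounds should deliver $|V(F')|=|U|\geq 12$ after arithmetic. A cleaner alternative is a discharging scheme in which each of the $12$ pentagons donates a unit of charge that is absorbed by uncovered vertices along the local obstruction above, with the planar combinatorics preventing any vertex from receiving more than one unit of charge.
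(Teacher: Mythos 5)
Your reduction to $|U|\geq 12$ is the right one (note $|U|=2|M|$, where $M$ is the perfect matching of $F-\mathcal{H}$ in a Clar structure), and your local observation that every pentagon contains at least one uncovered vertex is correct. But there is a genuine gap exactly where you acknowledge the difficulty: neither proposed globalization is carried out, and neither works as stated. The discharging version needs each of the twelve pentagons to be assigned its \emph{own} uncovered vertex, i.e.\ a system of distinct representatives for the sets of uncovered pentagon vertices; a single uncovered vertex can lie on two or three mutually adjacent pentagons, so injectivity is a nontrivial Hall-type condition, and ruling out the bad configurations forces you to use the matching $M$ itself (for instance, an uncovered vertex all of whose neighbours lie in Clar hexagons cannot be matched), not merely ``planar combinatorics.'' The Euler-formula version is likewise only a hope: after deleting $V(\mathcal{H})$ the pentagonal faces merge unpredictably with the hexagonal holes and with one another, the graph $F-V(\mathcal{H})$ may be disconnected, and no face-length bookkeeping is actually performed. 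As written, your local fact yields only $|U|\geq 4$.

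The missing ingredient is a parity refinement of your local obstruction, which is precisely Lemma~\ref{prem}: for a Clar cover $(\mathcal{H},M)$, every pentagon is exited by an \emph{odd} number of edges of $M$, while every hexagon is exited by an even number (the vertices of a face covered by Clar hexagons or by boundary $M$-edges come in pairs, so the number of exiting $M$-edges has the parity of the face length). Since each edge of $M$ exits exactly two faces, the $M$-associated graph has $|M|$ edges and twelve vertices of odd degree, one for each pentagon, whence $2|M|\geq 12$ and $n=6c(F)+2|M|\geq 6c(F)+12$, giving $c(F)\leq\lfloor n/6\rfloor-2$. Be aware that the paper does not reprove Theorem~\ref{Clar} (it cites Zhang--Ye, and Hartung's alternative proof), but the machinery of Sections 2 and 3 contains exactly this argument; your write-up becomes complete once the ``one uncovered vertex per pentagon'' count is replaced by this parity-plus-handshake count.
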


There are seven experimentally produced  fullerenes attaining the bound
in Theorem \ref{Clar}, namely, C$_{60}$:1 $(I_h)$ \cite{Kroto1985}, C$_{70}$:1 $(D_{5h})$ \cite{Taylor1990}, C$_{76}$:1 $(D_2)$, \cite{Ettl1991,Taylor1993}
C$_{78}$:1 $(D_3)$ \cite{Taylor1993,Diederich1991,Kikuchi1992}, C$_{82}$:3 $(C_2)$ \cite{Kikuchi1992},
 C$_{84}$:22 $(D_2)$ \cite{Taylor1993,Manolopoulos1992} and C$_{84}$:23 $(D_{2d})$ \cite{Taylor1993,Manolopoulos1992}, where
 C$_n$:$m$  occurs at position $m$
in a list of lexicographically ordered spirals that describe isolated-pentagon isomers with $n$ atoms \cite{Fowler1995}, and the point group of the
isomer is presented inside parenthesis.
  Ye and Zhang \cite{Ye2009} gave a graph-theoretical characterization of
  fullerenes with at least 60 vertices attaining the maximum Clar number $n/6-2$,
  and constructed all 18 fullerenes
 attaining the maximum value 8 among all 1812 fullerene
isomers of C$_{60}$. Later, Zhang et al. \cite{Zhang2010b} proposed a
combination of the Clar number and Kekul\'{e} count to predict the
stability of fullerenes, which distinguishes uniquely the
buckminsterfullerene C$_{60}$ from its all 1812 fullerene isomers. Recently, Hartung \cite{Hartung2013} gave another graph-theoretical characterization of
  fullerenes,  whose  Clar numbers are $n/6-2$, by establishing a connection between fullerenes and (4,6)-fullerenes, where a \textsf{(4,6)-fullerene}
  is a trivalent plane graph consisting solely of quadrilaterals and hexagons and is the molecular graph of some possible boron-nitrogen fullerene \cite{Fowler1996}.

In this paper, we will show that there are no fullerenes with $n\equiv 2\pmod 6$ vertices attaining this bound. Thus Theorem
\ref{Clar} is refined as the following theorem.

\begin{thm}\label{new}
Let $F$ be a fullerene with $n$ vertices. Then
\begin{equation*}
c(F)\leq\left\{\begin{array}{ll}
\lfloor n/6\rfloor-3,\qquad n\equiv 2\pmod 6;\\[2ex]
\lfloor n/6\rfloor-2,\qquad otherwise.
\end{array}
\right.
\end{equation*}
\end{thm}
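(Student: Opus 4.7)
The argument will be by contradiction. Suppose $F$ is a fullerene with $n=6k+2$ vertices and $c(F)=k-2=\lfloor n/6\rfloor-2$; fix a Clar set $\mathcal{H}$ of size $k-2$ and set $U:=V(F)\setminus V(\mathcal{H})$, so $|U|=14$ and the induced subgraph $F[U]$ admits a perfect matching $M'$ with seven edges. The plan is to combine the local parity structure forced on the pentagons with a global spoke-counting identity on $M'$, and then to exploit the maximality of $\mathcal{H}$ to rule out every admissible configuration of $U$ inside $F$.

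First, extract local parity information. Because $F$ is $3$-connected, cubic, and plane, any two distinct faces share at most one edge; hence every $H\in\mathcal{H}$ meets any pentagon $P$ either in the two endpoints of a shared edge or not at all. Consequently $|V(P)\cap U|$ is odd and lies in $\{1,3,5\}$ for every pentagon $P$, while $|V(H')\cap U|$ is even and lies in $\{0,2,4,6\}$ for every non-resonant hexagon $H'$. In particular every pentagon already contains at least one vertex of $U$.

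Next, set up the global count. For each face $f$ of $F$ put $s_f:=|V(f)\cap U|-2|M'\cap\partial f|$; this counts the $U$-vertices of $f$ whose $M'$-edge exits $\partial f$. Each $M'$-edge contributes a spoke to exactly two faces (at each endpoint, to the unique face at that endpoint not containing the edge), so
\[
\sum_{f}s_f \;=\; 2|M'| \;=\; 14.
\]
Since $s_P$ is odd and positive for every pentagon, $\sum_P s_P$ is an even integer in $[12,14]$, and $\sum_H s_H=14-\sum_P s_P\in\{0,2\}$ is a sum of even nonnegative integers. Only two distributions survive: either every pentagon has $s_P=1$ and exactly one non-resonant hexagon $H^*$ carries both remaining spokes ($s_{H^*}=2$), or eleven pentagons have $s_P=1$, one pentagon has $s_P=3$, and no hexagon carries any spoke.

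Closing the contradiction. In the second distribution, every non-resonant hexagon $H'$ contained entirely in $U$ would satisfy $|M'\cap\partial H'|=3$, so $H'$ would be $M'$-alternating and $\mathcal{H}\cup\{H'\}$ a Clar set of size $k-1$; hence no hexagon lies entirely inside $U$, and the rigid remaining structure must be defeated by pairing the pentagon-spoke pattern with Euler's formula on $F[U]$. The first distribution is treated similarly: the hexagon $H^*$ carrying the two spokes, together with the balance equations for the $M'$-edge types --- writing $a,b,d$ for the number of $M'$-edges shared by two pentagons, by a pentagon and a hexagon, or by two hexagons, one has $a+b+d=7$, $2a+b=\sum_P|M'\cap\partial P|$, and $2d+b=\sum_H|M'\cap\partial H|$ --- pin $H^*$ down enough to reveal, via a short $M'$-alternating interchange along $\partial H^*$, a modified perfect matching of $F[U]$ relative to which some disjoint hexagon becomes alternating, again contradicting maximality. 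The principal obstacle is this last geometric step: assembling the spoke distribution with the planar embedding of $F[U]$, the arrangement of the twelve pentagons around the holes left by $V(\mathcal{H})$, and the $M$-associated graph announced in the keywords, so as to produce the extra disjoint resonant hexagon in every admissible case.
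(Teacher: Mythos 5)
Your setup is sound and in fact runs parallel to the paper's: the parity facts you derive for $s_f$ are exactly the paper's lemma on exiting edges (odd for every pentagon, even for every hexagon), the identity $\sum_f s_f=2|M'|=14$ is the edge count of what the paper calls the $M$-associated graph, and your two surviving spoke distributions correspond precisely to the paper's two admissible $M$-associated graphs, $P_3\cup 5P_2$ (twelve degree-$1$ pentagons plus one degree-$2$ hexagon) and $K_{1,3}\cup 4P_2$ (eleven degree-$1$ pentagons plus one degree-$3$ pentagon). The genuine gap is the final contradiction, which you yourself flag as ``the principal obstacle'': you never actually derive one. Your plan --- Euler's formula on $F[U]$, an $M'$-alternating interchange along $\partial H^*$, and the production of an extra disjoint resonant hexagon in every admissible case --- is only a sketch, and it is doubtful it can be executed as stated: the obstruction here is global rather than local, so there is no reason a local rearrangement near $H^*$ or near the pentagon with three spokes must yield an augmenting hexagon. (Your observation that no hexagon can lie entirely inside $U$ is correct but rules out almost nothing.)

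What is missing is a short global parity count, which is how the paper finishes. From the Clar cover one forms the expansion of $F$ (widening each $M$-edge into a quadrilateral), shows it is a trivalent bipartite leapfrog graph, and passes to its reverse leapfrog with respect to the perfect Clar structure formed by $\mathcal{H}$ together with the new quadrilaterals. In both of your distributions this parent graph is a connected plane graph all of whose faces have even size (hence bipartite with a unique bipartition $(A,B)$), with exactly one vertex of degree $4$ and all remaining vertices of degree $3$. Counting its edges from the two sides gives $|E|=3|A|+1$ and $|E|=3|B|$, which is impossible modulo $3$; hence the assumed Clar structure cannot exist. Some degree-sum obstruction of this kind --- a single global congruence, not a case-by-case augmentation --- is the ingredient your argument still needs to close either distribution.
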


We say a fullerene \textsf{extremal} if the Clar number of the
fullerene attains the bound in Theorem \ref{new}. In addition to the seven
experimentally produced  extremal fullerenes mentioned before,
 there are two experimentally produced extremal fullerenes
C$_{80}$:1 $(D_{5d})$ \cite{Hennrich1996,Wang2000}, C$_{80}$:2 $(D_2)$ \cite{Hennrich1996} (see Figure \ref{fig.11}). Moreover, the
minimum fullerene C$_{20}$ is also an extremal fullerene.

\begin{figure}[h]
\begin{center}
\includegraphics{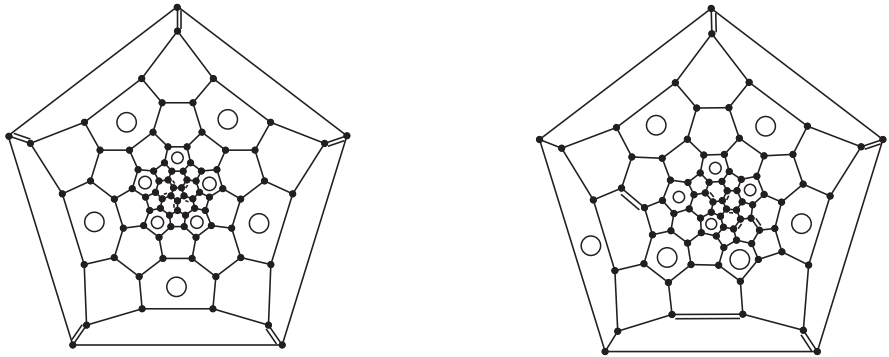}
\end{center}
\caption{Two experimentally produced extremal fullerenes (a) C$_{80}$:1 $(D_{5d})$, (b) C$_{80}$:2 $(D_2)$. (These two graphs are generated by a software package \cite{Schwerdtfeger2013} for constructing and analyzing
structures of fullerenes before further processing.)}
\protect\label{fig.11}
\end{figure}

Furthermore, we give a
graph-theoretical characterization of fullerenes, whose order $n$ is congruent
to 2 (respectively, 4) modulo 6, attaining the maximum Clar number
$\lfloor n/6\rfloor-3$ (respectively,
$\lfloor n/6\rfloor-2$).

\section{Preliminaries}
\setlength{\unitlength}{1cm}
This section presents some concepts and results to be used later.
For the concepts and notations of graphs not defined, we refer to  \cite{West2001}.

Let $F$ be a fullerene. A \textsf{perfect matching} (or
\textsf{Kekul\'{e} structure}) $M$ of $F$ is a set of edges such
that each vertex is incident with exactly one edge in $M$. The faces
with exactly half of their bounding edges in a perfect matching
$M$ of $F$ are called \textsf{alternating faces} with respect to $M$. A \textsf{resonant pattern} of $F$ is a set of
independent alternating faces with respect to some perfect matching. The \textsf{Clar number}
$c(F)$ of $F$ is the maximum size of all resonant patterns of $F$.
  A \textsf{Clar set} is a set of
independent alternating faces of size $c(F)$. If $\mathcal{H}$ is a resonant pattern of $F$ and $M_0$ is a perfect
matching of $F-\mathcal{H}$, then we say $(\mathcal{H},M_0)$ is
a \textsf{Clar cover} \cite{Zhang1996} of $F$. We say a Clar cover $(\mathcal{H},M_0)$ is a \textsf{Clar structure} if $\mathcal{H}$ is a Clar set of $F$.

\begin{figure}[h]
    \begin{center}
\includegraphics[width=12cm]{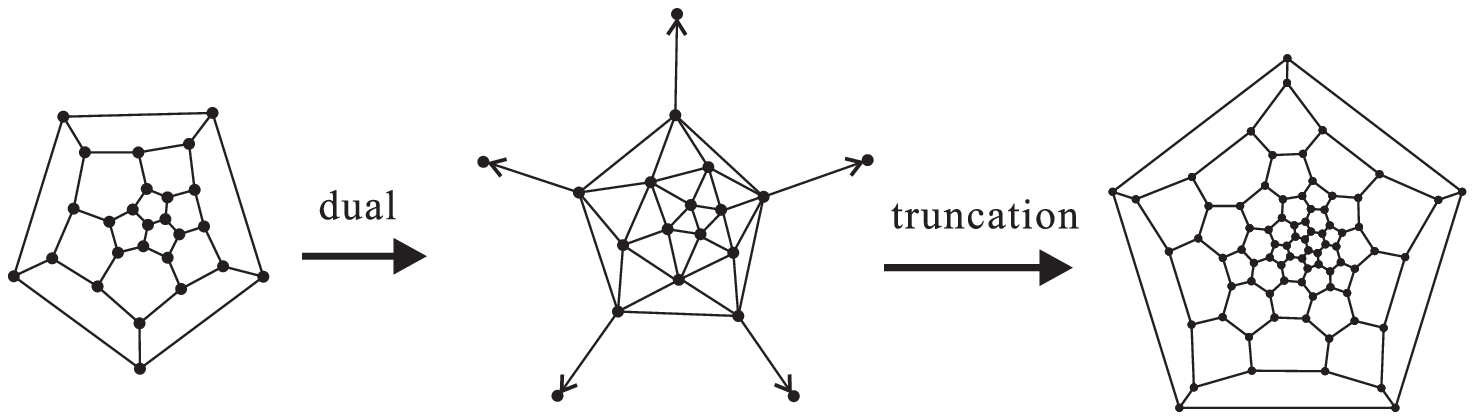}
\end{center}
   \caption{Illustration for the generation procedure of a fullerene with 78 vertices from another fullerene with 26 vertices
by Leapfrog transformation. In order to make the procedure geometrically intuitive, one vertex of the graph in the middle has to be chosen at infinity.}
\protect\label{fig.2}
\end{figure}

\textsf{Leapfrog transformation}  for a 2-connected
plane graph $G$ is usually defined as the truncation of the dual of
$G$ \cite{Godsil2001,Pisanski2000}. The \textsf{leapfrog graph} $\mathcal {L}(G)$ is obtained
from $G$ by performing the  leapfrog
transformation. The \textsf{dual} of a plane graph is built as follows: Place a point in the inner
of each face and join two such points if their corresponding
faces share a common edge \cite{Pisanski2000}.  The \textsf{truncation} of a 2-connected plane graph
$G$ can be obtained by replacing each vertex $v$ of degree $k$ with $k$
new vertices, one for each edge incident to $v$.  Pairs of vertices
corresponding to the edges of $G$ are adjacent, and $k$ new vertices
corresponding to a single vertex of $G$ are joined in the cyclic order given by the embedding to form a face of size $k$ \cite{Godsil2001}. Figure \ref{fig.2} illustrates the generation procedure of a fullerene with 78 vertices from another fullerene with 26 vertices
by leapfrog transformation.
Leapfrog transformation is defined equivalently as the dual of the omnicapping \cite{Fowler1987}.
Leapfrog fullerenes have their own chemical importance. Firstly, they obey the isolated-pentagon rule \cite{Fowler1995}. Secondly,
 they are known to be one of the two constructions that always have properly closed-shell configurations \cite{Fowler1994}. Finally,
 they attain the maximum Fries number $\frac{n}{3}$ and thus are maximally stable in a localised valence bond picture \cite{Fowler1992}.

Let $F$
 be a fullerene and $(\mathcal{H},M)$  a Clar
cover  of  $F$. For a face $f$  of $F$, we say that an edge $e$ in $M$ \textsf{exits}
$f$ if $e$ shares exactly one vertex with $f$.
The following lemma is essentially due to Hartung \cite{Hartung2013}.

\begin{lemma}\cite{Hartung2013}\label{prem}
 Let $F$
 be a fullerene and $(\mathcal{H},M)$  a Clar
cover  of  $F$. Then there are an even number of edges in $M$
(possibly $0$) exiting any hexagon and  an odd number of
edges in $M$ exiting any pentagon.
\end{lemma}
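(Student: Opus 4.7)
The plan is to track parities at a fixed face $f$ of size $k\in\{5,6\}$. Partition $V(f)=V_r\cup V_s$, where $V_r$ is the set of vertices of $f$ lying on some hexagon of $\mathcal{H}$ (and so removed in $F-\mathcal{H}$) and $V_s$ is the rest. First I would dispose of the easy case $f\in\mathcal{H}$: such an $f$ must be a hexagon, since a pentagon has odd boundary length and so cannot be alternating with respect to any perfect matching. Then every vertex of $f$ is deleted in $F-\mathcal{H}$, no edge of $M$ touches $f$, and the number of $M$-edges exiting $f$ is $0$, which is even, as required.

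Now assume $f\notin\mathcal{H}$. The first main step is to show $|V_r|$ is even. Suppose $v\in V_r$ and $v\in h$ for some $h\in\mathcal{H}$. Since $F$ is $3$-regular, exactly three distinct faces meet at $v$, and any two of them share an edge at $v$; in particular $f$ and $h$ share an edge $vw$. Because the hexagons of $\mathcal{H}$ are pairwise vertex-disjoint, the set $V(f)\cap V(h)$ equals exactly $\{v,w\}$, and for two different $h,h'\in\mathcal{H}$ the pairs $V(f)\cap V(h)$ and $V(f)\cap V(h')$ are disjoint. Hence $V_r$ decomposes as a disjoint union of $2$-element subsets, so $|V_r|$ is even and $|V_s|\equiv k\pmod 2$.

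The second step is a handshake-style count on $V_s$. Every $v\in V_s$ is matched by some edge of $M$, and since $v$ has three edges in $F$ (two on the boundary of $f$ and one spoke), that matching edge is either a boundary edge of $f$ (whose other endpoint is then automatically in $V_s$) or the spoke of $f$ at $v$. Let $A$ denote the number of boundary edges of $f$ in $M$ and $B$ the number of spokes of $f$ in $M$, so that $B$ is precisely the number of $M$-edges exiting $f$. Each boundary edge covers two vertices of $V_s$ and each spoke covers one, so $2A+B=|V_s|$. Therefore $B\equiv k\pmod 2$: even when $f$ is a hexagon, odd when $f$ is a pentagon.

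The main obstacle, as I see it, is the vertex-pairing argument for $V_r$: it requires combining the local degree-$3$ structure at $v$ (to upgrade a shared vertex into a shared edge between $f$ and $h$) with the global independence of $\mathcal{H}$ (to guarantee that different elements of $\mathcal{H}$ contribute disjoint pairs on $f$). Once this parity transfer from $V(f)$ down to $V_s$ is secured, the remaining matching count is routine.
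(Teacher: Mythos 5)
The paper itself gives no proof of this lemma; it is quoted from Hartung \cite{Hartung2013}. Your argument is correct and is essentially the standard parity count behind that result: split $V(f)$ into the vertices absorbed by faces of $\mathcal{H}$ and the vertices saturated by $M$, show the first part has even size, and then read off the parity of the number of exiting edges from the handshake identity $2A+B=|V_s|$. Two small points deserve tightening, though neither affects the conclusion. First, your justification that $V(f)\cap V(h)=\{v,w\}$ is misattributed: the pairwise vertex-disjointness of $\mathcal{H}$ only separates the contributions of \emph{different} members of $\mathcal{H}$; to rule out $f$ and a \emph{single} $h$ meeting in more than one edge you need that two faces of a fullerene share at most one edge, which follows from cyclic $5$-edge-connectivity (Lemma \ref{cyclical}). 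For parity alone this is not even needed: at a degree-$3$ vertex two faces meeting there share an edge, distinct shared edges of $f$ and $h$ are vertex-disjoint, so $|V(f)\cap V(h)|$ is even in any case. Second, both the identity $2A+B=|V_s|$ and the identification of $B$ with the number of exiting edges implicitly assume that the spoke at $v\in V_s$ has its other endpoint off $f$, i.e., that $f$ has no chord; this holds because a chord of a face of size $5$ or $6$ would create a cycle of length at most $4$, contradicting the fact that fullerenes have girth $5$.
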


A \textsf{perfect Clar structure} \cite{Fowler1994} (or
\textsf{face-only vertex covering} \cite{Hartung2013}) of a 2-connected
plane graph $G$ is a set of vertex-disjoint faces that include each
vertex of $G$ once. 
The following lemma \cite{Fowler1994}
provides a graph-theoretical characterization of leapfrog graphs on the plane.

\begin{lemma}\cite{Fowler1994}\label{leap}
A $2$-connected plane graph is a leapfrog graph if and only if it is trivalent and
has a perfect Clar structure.
\end{lemma}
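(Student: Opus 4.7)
The plan is to prove both implications by exploiting the face structure intrinsic to the leapfrog construction. For the forward direction, if $G = \mathcal{L}(H) = \mathrm{Trunc}(H^*)$, then every new vertex of $G$ (produced by truncating some vertex $w$ of $H^*$) has exactly three neighbors: two siblings on the new cycle at $w$ and one vertex across the $H^*$-edge on which it sits. Thus $G$ is trivalent. Moreover, the truncation cycles, one per vertex of $H^*$, are pairwise vertex-disjoint and together cover every vertex of $G$, so they constitute a perfect Clar structure.

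For the reverse direction, let $G$ be 2-connected, trivalent, and plane, with perfect Clar structure $\mathcal{P}$. Call an edge $e \in E(G)$ \emph{short} if it lies on the boundary of some face in $\mathcal{P}$ and \emph{long} otherwise. Since each vertex $v$ belongs to a unique face $f_v \in \mathcal{P}$ that contributes two of the three edges at $v$, every vertex of $G$ is incident to exactly one long edge. A key intermediate claim is that the boundary of every face not in $\mathcal{P}$ alternates between short and long edges: at each boundary vertex $v$, the two edges of that face at $v$ consist of exactly one short edge (shared with $f_v$) and the unique long edge at $v$. In particular, every non-$\mathcal{P}$ face has even size $2k$.

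I would then invert the leapfrog. Contract each face of $\mathcal{P}$ in $G$ to a single point, thereby collapsing every short edge; let $H'$ denote the resulting plane graph. Its vertices are the faces of $\mathcal{P}$, its edges are in bijection with the long edges of $G$ (with cyclic order inherited from $G$), and, by the alternation claim, its faces are in bijection with the non-$\mathcal{P}$ faces of $G$, a face of size $2k$ in $G$ becoming a face of size $k$ in $H'$. Define $H := (H')^*$. Then $H^* = H'$, and truncating $H^*$ replaces each vertex $f$ of $H'$ by a cycle of length $|f|$; these cycles reproduce the short edges of $G$ while the original edges of $H'$ supply the long edges, yielding $\mathcal{L}(H) = G$.

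The main obstacle will be verifying that $H'$ is genuinely a well-defined 2-connected plane graph with the claimed face structure. Concretely, one must check that distinct non-$\mathcal{P}$ faces of $G$ remain distinct after contraction, that the inherited cyclic order of long edges around each contracted vertex matches the cyclic order along the corresponding face boundary in $G$, and that the global plane embedding survives these identifications. Each of these facts reduces to a careful local analysis using 2-connectedness of $G$ together with the alternation claim, which is the technical heart of the argument.
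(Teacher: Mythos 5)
The paper offers no proof of this lemma at all---it is imported from Fowler and Pisanski \cite{Fowler1994} as a cited fact---so there is no in-paper argument to measure yours against; I can only assess your sketch on its own terms, and it is the standard argument, correct in outline. The forward direction is fine. In the reverse direction, your alternation claim (each non-$\mathcal{P}$ face uses, at each of its vertices, one short and one long edge, hence has even length) is exactly the right key step, and your inverse construction is consistent with the ``reverse leapfrog'' that the paper defines immediately after the lemma: the paper places a vertex in each face \emph{not} in $\mathcal{P}$ and joins two such vertices when the corresponding faces are adjacent, and since two non-$\mathcal{P}$ faces can only meet along a long edge, that graph is precisely the planar dual of your contracted graph $H'$; in other words your $H=(H')^{*}$ coincides with the paper's $\mathcal{L}^{-1}(G,\mathcal{P})$. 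As for the deferred ``technical heart'': most of it is routine ($2$-connectedness makes face boundaries cycles, so each vertex is incident to exactly two edges of its $\mathcal{P}$-face and hence to exactly one long edge; contracting a facial cycle preserves the embedding; $((H')^{*})^{*}\cong H'$ because $H'$ is connected), but two points deserve explicit treatment. First, $H'$ is in general a plane \emph{multigraph}---for the cube, $H'$ is two vertices joined by four parallel edges, and the construction still returns the cube---so you should check that the truncation/dual machinery is being applied in a setting where multiple edges are allowed. Second, you must rule out, or show harmless, a long edge with both endpoints on the same face of $\mathcal{P}$: such an edge becomes a loop of $H'$, and then the truncation of $H'$ no longer reproduces $G$ with the contracted faces as genuine faces. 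That single degeneracy is the one place where the argument could genuinely fail rather than merely require bookkeeping.
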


Suppose $G$ is a leapfrog graph. Let $\mathcal{P}$ be a perfect Clar structure of $G$. We construct a new plane graph as follows:
For each face of $G$ not belonging to  $\mathcal{P}$, we  allocate a vertex in the inner of it, then connect two vertices with an edge in the resulting graph if their corresponding faces are adjacent in $G$. It is not difficult to see that the leapfrog graph of the resulting graph is $G$. This graph is called  the \textsf{reverse leapfrog} of $G$ determined by $\mathcal{P}$, and
denote it by $\mathcal{L}^{-1}(G, \mathcal{P})$.

\begin{figure}
    \begin{center}
\includegraphics[width=12cm]{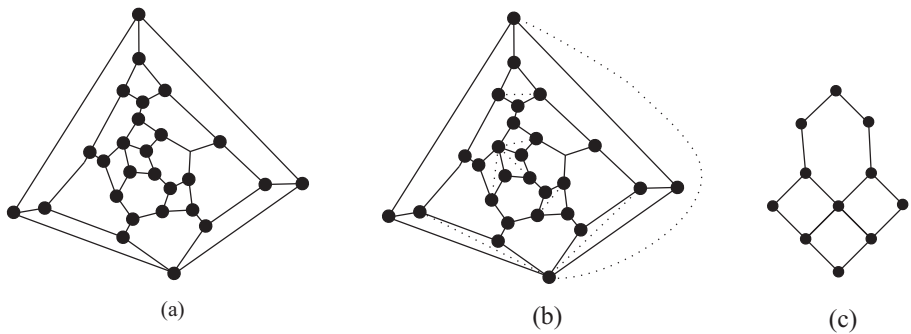}
\end{center}
   \caption{(a) A plane graph; (b) A generalized diagonalization of (a); (c) $T$.}
\protect\label{fig.13}
\end{figure}

Let $G$ be a 2-connected plane graph satisfying : (1) It consists of only  faces of even size,
(2) The maximum degree of $G$ is at most 6 and the minimum degree of $G$ is 3, and
(3) Each vertex of degree 4 is incident with at least 2 quadrilaterals, each vertex of degree 5 is incident with at least 4 quadrilaterals and
 each vertex of degree 6 is incident with exactly 6 quadrilaterals.
  For our purpose we define a \textsf{generalized diagonalization} of  $G$
as a choice of diagonal vertices for each quadrilateral so that each
vertex of degree 4 is chosen twice or thrice, each vertex of degree $5$ is
chosen four or five times, each vertex of degree $6$ is
chosen six times and  any other vertex is chosen at most once. A generalised diagonalization of a plane graph in Figure \ref{fig.13} (a) is illustrated in Figure \ref{fig.13} (b). The generalized diagonalization for such graphs is a natural generalization of
  diagonalization for (4,6)-fullerenes introduced by Hartung \cite{Hartung2013}.  A \textsf{diagonalization} of a (4,6)-fullerene is defined as a choice of diagonal vertices for each quadrilateral so that each vertex is chosen at most once \cite{Hartung2013}.
The following theorem of Hartung \cite{Hartung2013} gives a  graph-theoretical characterization of
 extremal  fullerenes with $n\equiv 0\pmod 6$ vertices.
 \begin{thm}\label{ex6k}\cite{Hartung2013}
 The extremal  fullerenes with $n\equiv 0\pmod 6$ vertices are in one-to-one correspondence with the
diagonalized $(4,6)$-fullerenes with $n/3+4$ vertices.
 \end{thm}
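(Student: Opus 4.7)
\noindent\emph{Proof sketch.} The plan is to construct an explicit bijection, exploiting the matching face counts. An extremal fullerene $F$ with $n\equiv 0\pmod 6$ has $12$ pentagons and $n/2-10$ hexagons, while a $(4,6)$-fullerene $Q$ with $n/3+4$ vertices has exactly $6$ quadrilaterals and $n/6-2$ hexagons. Since $c(F)=n/6-2$ and a diagonalization of $Q$ picks out $2\cdot 6=12$ distinct vertices, the hope is that the Clar set of $F$ corresponds to the hexagons of $Q$, the $12$ pentagons of $F$ organize themselves into the $6$ quadrilaterals of $Q$, and the $12$ vertices of $F$ outside the Clar set correspond to the $12$ diagonal vertices of $Q$.

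Forward direction. Fix a Clar structure $(\mathcal{H},M)$ of $F$ with $|\mathcal{H}|=n/6-2$, so that $V(F)\setminus V(\mathcal{H})$ consists of exactly $12$ vertices paired by $6$ edges of $M$. By Lemma~\ref{prem}, each pentagon of $F$ has an odd number of exiting $M$-edges and each hexagon an even number. I would use these parity constraints, together with the small count of $12$ unclaimed vertices, to establish the key structural lemma: the $12$ pentagons partition into $6$ pairs, each pair occupying a local ``diamond'' whose outer boundary is a $4$-cycle and whose two $\mathcal{H}$-complement vertices sit opposite each other on that $4$-cycle. Collapsing each diamond down to this $4$-cycle while keeping every resonant hexagon of $\mathcal{H}$ intact should yield a trivalent plane graph with $6$ quadrilaterals, $n/6-2$ hexagons, and $n/3+4$ vertices, i.e., a $(4,6)$-fullerene $Q$ equipped with a valid diagonalization (in fact exactly $12$ distinct vertices chosen).

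Reverse direction. Given a diagonalized $(4,6)$-fullerene $Q$ with $n/3+4$ vertices, apply the inverse local move: replace each quadrilateral together with its distinguished diagonal pair by the corresponding two-pentagon diamond, leaving each hexagon untouched. A direct Euler calculation confirms that the output $F$ has $n$ vertices, $12$ pentagons, and $n/2-10$ hexagons, while the images of the hexagons of $Q$, together with a natural perfect matching of $F$ on the newly created diamond edges, provide a Clar cover with $n/6-2$ resonant hexagons. Hence $c(F)\ge n/6-2$, and Theorem~\ref{Clar} forces equality, so $F$ is extremal. Routine checking that the two operations are mutually inverse completes the bijection.

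The main obstacle is the structural lemma underlying the forward direction: the parity information supplied by Lemma~\ref{prem} is essentially local, and turning it into the global statement that the twelve pentagons fit disjointly into six diamonds compatible with a single $(4,6)$-fullerene requires a careful case analysis. In particular, one must rule out configurations in which two pentagons share only a vertex, or an $M$-edge bridges unrelated pentagonal neighborhoods, or the contracted graph fails to be simple or trivalent. The extremality hypothesis $|\mathcal{H}|=n/6-2$ should be just enough slack to force the rigid diamond pattern, but extracting this rigidity from the parity lemma alone is the combinatorial core of the argument.
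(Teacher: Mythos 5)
Your proposed bijection is not the one that underlies this theorem, and its central structural claim is false. You assert that in an extremal fullerene $F$ with $n\equiv 0\pmod 6$ the twelve pentagons pair up into six adjacent pairs, each pair forming a ``diamond'' bounded by a $4$-cycle that contracts to a quadrilateral of the $(4,6)$-fullerene. The buckminsterfullerene C$_{60}$:1 $(I_h)$ already refutes this: it is extremal ($c=8$), yet it satisfies the isolated-pentagon rule, so no two of its pentagons share an edge and no such diamonds exist. What is actually true in this case is much weaker: since $|M|=6$ and the $M$-associated graph must be $6P_2$, each edge of $M$ has one endpoint on each of two pentagons, but those two pentagons need not be adjacent. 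The correct correspondence is the global leapfrog-type one that Sections 3--5 of this paper develop for the residues $2$ and $4$: the $(4,6)$-fullerene is the parent $\mathcal{L}^{-1}(\mathscr{E}(\mathcal{H},M))$, whose \emph{vertices} are the non-resonant faces of $F$ (twelve pentagons plus the non-Clar hexagons), whose hexagonal faces are the Clar hexagons, whose six quadrilateral faces are the six edges of $M$, and whose diagonal pairs record which two of the four faces around each $M$-edge are pentagons. Since the $M$-associated graph is $6P_2$, every vertex of the parent has degree $3$, which is exactly why a genuine $(4,6)$-fullerene appears here and not the degree-$4$/degree-$5$ variants of Theorems \ref{6k+4} and \ref{6k+2}.

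Your reverse direction inherits the same defect and also fails on arithmetic. A surgery confined to the six quadrilaterals that ``leaves each hexagon untouched'' can only add a bounded number of vertices, but you must pass from $n/3+4$ vertices to $n$; no local diamond replacement achieves this. The actual inverse construction is: leapfrog the diagonalized $(4,6)$-fullerene $Q$ (giving a trivalent graph on $3(n/3+4)=n+12$ vertices with six quadrilaterals), then in each quadrilateral contract the pair of opposite edges determined by the chosen diagonal, which deletes $12$ vertices and turns the two faces met by the contracted edges into pentagons -- producing a fullerene on $n$ vertices with twelve pentagons and a visible Clar cover of size $n/6-2$, whence extremality follows from Theorem~\ref{Clar}. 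The condition that each vertex of $Q$ is chosen at most once is precisely what guarantees no face is shrunk below size five. If you want to salvage your write-up, replace the diamond lemma by the expansion/reverse-leapfrog machinery (Lemmas \ref{leap} and \ref{extension} and the $M$-associated graph) and rebuild both directions around it.
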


 A \textsf{bipartite graph} is a graph whose vertices can be divided into two disjoint sets such that every edge has an end in each set.
The following lemma is well known.
\begin{lemma}\label{plane}
 A connnected plane graph is bipartite if and only if it has only faces of even size.
\end{lemma}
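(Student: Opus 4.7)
}
The plan is to prove both implications via the standard equivalence ``a graph is bipartite iff every cycle has even length,'' interpreting faces in terms of their boundary walks in the planar embedding.

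For the forward direction, I would assume $G$ is connected and bipartite with bipartition $(X,Y)$, and consider an arbitrary face $f$ of $G$. The boundary of $f$ is a closed walk $v_0 e_1 v_1 e_2 \cdots e_k v_k$ with $v_k=v_0$, and since consecutive vertices of any walk in a bipartite graph lie in opposite parts, the vertex $v_i$ belongs to $X$ exactly when $i$ is even (say). The condition $v_k = v_0$ then forces $k$ to be even, so $f$ has even size. Nothing beyond the alternating-parts argument is needed here.

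For the backward direction, the key step is to reduce an arbitrary cycle to a combination of face boundaries. I would take any cycle $C$ in $G$; by the Jordan curve theorem, $C$ separates the plane into an interior and an exterior region, and I would let $f_1,\dots,f_m$ denote the bounded faces of $G$ lying inside $C$. Every edge of $G$ strictly inside $C$ lies on the boundary of exactly two of the $f_i$, while every edge of $C$ lies on the boundary of exactly one $f_i$. Summing the boundary lengths,
\begin{equation*}
\sum_{i=1}^{m} |\partial f_i| \;=\; |C| \;+\; 2\cdot(\text{number of interior edges}),
\end{equation*}
so $|C|\equiv\sum_i |\partial f_i|\pmod 2$. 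By hypothesis each $|\partial f_i|$ is even, hence $|C|$ is even. Since every cycle of $G$ has even length, $G$ is bipartite.

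The only subtlety -- and the main point to be careful about -- is the meaning of ``face size'' when $G$ has cut edges: a bridge contributes twice to the boundary walk of the face it borders, so ``size'' must be read as the length of the boundary walk with multiplicity. This convention keeps the counting identity above valid in full generality; alternatively, one can restrict attention to the $2$-edge-connected case (which is all that is needed for the paper's applications to fullerenes and $(4,6)$-fullerenes) and then each face boundary is a simple cycle and no multiplicity issue arises.
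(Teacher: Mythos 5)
The paper states Lemma \ref{plane} without proof, simply citing it as well known, so there is no in-paper argument to compare yours against. Your proof is the standard one and is correct: in the forward direction a facial boundary walk in a bipartite graph alternates between the two parts and so has even length, and in the backward direction the mod-2 summation of the boundary lengths of the faces inside a cycle $C$ (each interior edge counted twice, each edge of $C$ once) reduces the parity of $|C|$ to the parities of the face sizes; your remark that a bridge contributes twice to the boundary walk of its unique incident face is exactly the convention needed for that counting identity, and connectedness is correctly invoked so that every face is bounded by a single closed walk.
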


 A graph $G$ is \textsf{cyclically
$k$-edge-connected} if deleting less than $k$ edges from $G$ can not
separate it into two components such that each of them contains at
least one cycle. The \textsf{cyclical edge-connectivity of $G$}, denote by
$c\lambda(G)$, is the greatest integer $k$ such that $G$ is
cyclically $k$-edge-connected.  Cyclical edge-connectivity plays an important role in handling problems related to fullerenes.
For examples, it is used to study the 2-extendability \cite{Zhang2001} and the lower bound of the forcing number \cite{Zhang2010a} of fullerenes and the hamiltonicity \cite{Maruic2007}
of the leapfrog fullerenes.
Do\v{s}li\'{c} \cite{Doslic2003}, Qi and Zhang  \cite{Qi2008}, and
Kardo\v{s} and \v{S}krekovski \cite{Kardos2008} determined the cyclical edge-connectivity of fullerenes.

\begin{lemma}\cite{Doslic2003,Qi2008,Kardos2008}\label{cyclical}
 For a fullerene $F$, $c\lambda(F)=5.$
\end{lemma}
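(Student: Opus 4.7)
I would prove the inequality in both directions. For the upper bound $c\lambda(F)\le 5$, fix a pentagon $P$ of $F$ and let $S$ be the set of five edges with exactly one endpoint on $P$. Removing $S$ separates the $5$-cycle $P$ from the complement, which has $n-5\ge 15$ vertices, is connected, and by a direct edge count contains at least $|V|$ edges and hence a cycle; thus $S$ is a cyclic $5$-edge cut.

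For the reverse inequality I would argue by contradiction. Suppose a cyclic edge cut $S$ of size $k\le 4$ splits $F$ into subgraphs $F_1,F_2$, each containing a cycle. Fullerenes are $3$-connected (and therefore $3$-edge-connected, since a $3$-regular $3$-connected graph has edge-connectivity at least $3$), so $k\in\{3,4\}$; in the plane dual of $F$, the cut $S$ corresponds to a simple cycle of length $k$ passing through a sequence $g_1,\dots,g_k$ of faces of $F$. Regarding each $F_i$ as a plane graph with $v_i$ vertices, $e_i=(3v_i-k)/2$ edges, $p_i$ interior pentagons, $h_i$ interior hexagons, and a single outer face of length $\ell_i$, Euler's formula combined with the face-sum $5p_i+6h_i+\ell_i=2e_i$ yields the identity
\begin{equation*}
\ell_i \;=\; p_i + 2k - 6.
\end{equation*}
Writing $p_0$ for the number of pentagons among $g_1,\dots,g_k$, the global face count gives $p_1+p_2+p_0=12$, while summing arc-lengths around each cut face yields $\ell_1+\ell_2=4k-p_0$; these are compatible with the identity above.

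To close the proof I would analyze $k=3$ and $k=4$ separately. For $k=3$ the identity reads $\ell_i=p_i$; because every cycle in the fullerene $F$ has length at least $5$, either the outer boundary of $F_i$ contains a simple cycle of length $\ge 5$ (forcing $p_i\ge 5$ on both sides, whence $p_1+p_2\ge 10$ and $p_0\le 2$), or two of the three cut edges share a vertex and the cut degenerates to a smaller one. A direct local examination of the three dual-adjacent cut faces in a $3$-regular plane graph then shows that no arrangement with $p_0\le 2$ is compatible with fullerene face sizes. For $k=4$ the identity becomes $\ell_i=p_i+2$, giving $p_i\ge 3$ and $p_0\le 6$; a case analysis over the face-type pattern of $(g_1,g_2,g_3,g_4)$ together with the possible arrangements of shared cut-edge endpoints then eliminates every remaining configuration. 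The main obstacle is this final $k=4$ analysis: the combinatorics of a $4$-face belt in a fullerene are intricate, and one must track face types, cut-edge incidences, and the $3$-regularity of $F$ at each boundary vertex to rule out every arrangement.
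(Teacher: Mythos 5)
The paper offers no proof of this lemma to compare against: it is imported verbatim from Do\v{s}li\'{c}, Qi--Zhang and Kardo\v{s}--\v{S}krekovski, so your proposal can only be judged on its own merits. The parts you actually carry out are sound. The five edges leaving a pentagon do form a cyclic $5$-edge cut, since the complement has $\tfrac{3n}{2}-10\ge n-5$ edges and hence contains a cycle; and the Euler-formula bookkeeping for a putative cyclic $k$-edge cut is correct: one does get $\ell_i=p_i+2k-6$, and this is consistent with $p_1+p_2+p_0=12$ and $\ell_1+\ell_2=4k-p_0$. But, as you yourself observe, no contradiction falls out of this counting, and that is the whole difficulty.

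The genuine gap is that the concluding case analyses for $k=3$ and $k=4$ are asserted rather than performed, and they are the entire content of the lower bound. For $k=3$ your own inequalities admit, for example, three cut hexagons whose arcs split as $(2,2,1)$ on one side and $(2,2,3)$ on the other, giving $\ell_1=p_1=5$, $\ell_2=p_2=7$, $p_0=0$: every relation you derive is satisfied, so ``a direct local examination shows no arrangement is compatible'' is a promissory note, not a proof. Killing such configurations requires a structural argument about how the arcs concatenate into the outer boundary of each $F_i$ and why a closed walk of that length cannot enclose that many pentagons (e.g.\ in the example above, the interior of the $5$-cycle bounding $F_1$ would hang on only two edges, violating $3$-edge-connectivity) --- and this is exactly the delicate part of the published proofs; Do\v{s}li\'{c}'s original argument had a lacuna here that Qi and Zhang repaired. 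The $k=4$ analysis is harder still and you explicitly concede it. You also lean on several unproved (true, but not free) facts: $3$-connectivity of fullerenes, that a minimum cyclic edge cut is a bond whose dual is a simple cycle with both sides connected, and that the outer boundary of each $F_i$ contains a cycle so that $\ell_i\ge 5$. As it stands the proposal is a correct setup for the standard argument, not a proof.
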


\section{Proof of Theorem \ref{new}}

Let $F$ be a fullerene and $(\mathcal{H},M)$ a  Clar cover of $F$. Then the
\textsf{expansion} of $F$ is defined as follows: Widen each edge in
$M$ into a quadrilateral. Each vertex covered by $M$ becomes an
edge (see Figure \ref{fig.7}).
Denote this new graph by $\mathscr{E}(\mathcal{H},M)$ and the set of
quadrilaterals
 by $\mathcal{Q}$. The following lemma will be useful.

\begin{figure}
\begin{center}
\includegraphics{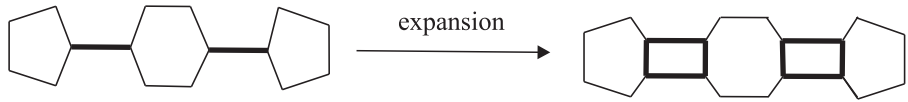}
\end{center}
   \caption{Illustration for expansion.}
\protect\label{fig.7}
\end{figure}

\begin{lemma}\label{extension}
Let $F$ be a fullerene with a Clar cover $(\mathcal{H},M)$. Then
\begin{description}
\item $(1)$  $\mathscr{E}(\mathcal{H},M)$ is a leapfrog graph.
\item $(2)$  $\mathscr{E}(\mathcal{H},M)$ is a bipartite graph.
\item $(3)$  $\mathscr{E}(\mathcal{H},M)$ is face $3$-colorable, that is, its faces can be colored with $3$ colors such that no two adjacent faces receive the same color.
\end{description}
\end{lemma}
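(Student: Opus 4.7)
The plan is to spell out the local structure of $\mathscr{E}(\mathcal{H},M)$ and then derive the three claims from Lemmas \ref{prem}, \ref{leap}, and \ref{plane}. Each vertex $u\in V(F)\setminus V(\mathcal{H})$ is the endpoint of a unique edge of $M$, so widening splits $u$ into two new trivalent vertices joined by the short side of the corresponding quadrilateral; each vertex $u\in V(\mathcal{H})$ keeps its three original edges, and no hexagon in $\mathcal{H}$ has an edge of $M$ on its boundary (since $M$-edges have no endpoint in $V(\mathcal{H})$), so $\mathcal{H}$-hexagons are untouched by the widening. Thus $\mathscr{E}(\mathcal{H},M)$ is a trivalent plane graph whose faces come in three types: the widened quadrilaterals in $\mathcal{Q}$, the hexagons of $\mathcal{H}$, and enlarged copies of the faces of $F$ lying outside $\mathcal{H}$.

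For (1) I would show that $\mathcal{P}:=\mathcal{Q}\cup\mathcal{H}$ is a perfect Clar structure of $\mathscr{E}(\mathcal{H},M)$ and then invoke Lemma \ref{leap}. Each vertex arising from $V(F)\setminus V(\mathcal{H})$ belongs to exactly one quadrilateral of $\mathcal{Q}$ (the widening of its own $M$-edge), while each vertex inherited from $V(\mathcal{H})$ lies in exactly one face of $\mathcal{H}$ because the faces of $\mathcal{H}$ are vertex-disjoint as a resonant pattern; these two families of vertices are themselves disjoint, so $\mathcal{P}$ covers each vertex exactly once. Two-connectivity of $\mathscr{E}(\mathcal{H},M)$ passes down from that of $F$ since widening an $M$-edge is a local vertex-splitting operation (together with adding the short-side edge $u_1u_2$) that cannot disconnect a $2$-connected graph. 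Hence Lemma \ref{leap} yields that $\mathscr{E}(\mathcal{H},M)$ is a leapfrog graph.

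For (2), by Lemma \ref{plane} it is enough to check that every face has even length. Quadrilaterals (length $4$) and $\mathcal{H}$-hexagons (length $6$) are obviously even. For a face $f$ of $F$ outside $\mathcal{H}$, I would walk around $\partial f$ and classify each of its vertices: at a vertex in $V(\mathcal{H})$ or at a vertex whose $M$-edge lies on $\partial f$, no new edge is inserted into $\partial f$; at a vertex whose $M$-edge exits $f$, the short side of the associated quadrilateral is inserted into $\partial f$, contributing exactly one edge. The new boundary of $f$ therefore has length $|\partial f|+\beta(f)$, where $\beta(f)$ is the number of $M$-edges exiting $f$. By Lemma \ref{prem}, $\beta(f)$ is even when $f$ is a hexagon and odd when $f$ is a pentagon, so $|\partial f|+\beta(f)$ is even in either case.

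Statement (3) is then immediate: a connected trivalent plane graph in which every face has even size is face $3$-colorable, because its planar dual is an Eulerian triangulation and therefore $3$-vertex-colorable. Trivalency and evenness of every face were both established above, so the conclusion follows. The main obstacle is the bookkeeping in (2): one must verify that the three vertex types on $\partial f$ are exhaustive and that the short side of a quadrilateral gets inserted into $\partial f$ precisely when the associated $M$-edge exits $f$. Once this local accounting is pinned down, Lemma \ref{prem} supplies the parity needed to finish (2), and parts (1) and (3) follow by short appeals to Lemmas \ref{leap} and \ref{plane}.
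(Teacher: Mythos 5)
Your proposal is correct and follows essentially the same route as the paper: part (1) via the perfect Clar structure $\mathcal{H}\cup\mathcal{Q}$ and Lemma \ref{leap}, part (2) via the parity of exiting $M$-edges (Lemma \ref{prem}) together with Lemma \ref{plane}, and part (3) from trivalency plus even faces (the paper cites Saaty--Kainen where you sketch the Eulerian-triangulation dual argument). Your version merely makes explicit the local bookkeeping of how widening changes each face length, which the paper leaves implicit.
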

\begin{proof}
(1) We can see that $\mathscr{E}(\mathcal{H},M)$ is trivalent and
$\mathcal{H}\cup\mathcal{Q}$ forms a perfect Clar structure of
$\mathscr{E}(\mathcal{H},M)$. Thus by Lemma \ref{leap},
$\mathscr{E}(\mathcal{H},M)$ is a leapfrog graph.

(2) If there are an odd number of edges
in $M$ exiting a face of $f$, then $f$ is a pentagon from Lemma \ref{prem} and changes to an
even face after expansion; If there are an even number of edges in $M$ exiting a
face of $f$, then $f$ is a hexagon from Lemma \ref{prem} and changes to an even face after
expansion. Further, there are additional $|M|$ quadrilaterals after
expansion. Hence all faces of $\mathscr{E}(\mathcal{H},M)$ are
faces of even size. By Lemma \ref{plane}, it follows that $\mathscr{E}(\mathcal{H},M)$ is a bipartite graph.

(3) $\mathscr{E}(\mathcal{H},M)$ is face 3-colorable since
$\mathscr{E}(\mathcal{H},M)$ is trivalent and bipartite. Saaty and
Kainen proved that a trivalent plane graph is face 3-colorable if
and only if it has only faces of even degree \cite{Saaty1977}.
\end{proof}

It is implicit in Fowler's work \cite{Fowler1994} that if a trivalent plane graph is face
3-colorable, then it has unique face 3-coloring up to permutation.
Since $\mathscr{E}(\mathcal{H},M)$ is face
3-colorable, each color class forms a perfect Clar structure. Thus
there are three reverse leapfrog graphs of $\mathscr{E}(\mathcal{H},M)$ determined by the three  perfect Clar
structures of $\mathscr{E}(\mathcal{H},M)$. The one corresponding to the perfect
Clar structure $(\mathcal{H},\mathcal{Q})$ of
$\mathscr{E}(\mathcal{H},M)$, that is, $\mathcal{L}^{-1}((\mathscr{E}(\mathcal{H},M),(\mathcal{H},\mathcal{Q}))$, is called the \textsf{parent} of
$\mathscr{E}(\mathcal{H},M)$, and denoted simply by
$\mathcal{L}^{-1}(\mathscr{E}(\mathcal{H},M))$.
 It has exactly
$|M|$ quadrilaterals and  $|\mathcal{H}|$ hexagons.
The following corollary follows immediately.
\begin{cor}\label{parent}
Let $F$ be a fullerene with a  Clar cover $(\mathcal{H},M)$. Then
\begin{description}
\item $(1)$ $\mathcal{L}^{-1}(\mathscr{E}(\mathcal{H},M))$ is a connected plane bipartite graph.
\item $(2)$ $\mathcal{L}^{-1}(\mathscr{E}(\mathcal{H},M))$ consists solely of quadrilaterals and  hexagons.
\item$(3)$ The degree of each vertex of  $\mathcal{L}^{-1}(\mathscr{E}(\mathcal{H},M))$ is at least $3$ and at most $6$.
 \end{description}
 \end{cor}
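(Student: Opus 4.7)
The plan is to decode all three claims from the face $3$-coloring of $G:=\mathscr{E}(\mathcal{H},M)$ supplied by Lemma \ref{extension}. Since $G$ is trivalent and face $3$-colourable, and since $\mathcal{P}:=\mathcal{H}\cup\mathcal{Q}$ is one perfect Clar structure of $G$, $\mathcal{P}$ is one class of the unique face $3$-colouring; write $\mathcal{B}$ and $\mathcal{C}$ for the other two. By the construction of the reverse leapfrog, $\mathcal{L}^{-1}(G)$ has vertex set in bijection with $\mathcal{B}\cup\mathcal{C}$, edges recording edge-adjacency of those faces in $G$, and the embedding inherited from $G$. Bipartiteness in (1) is then immediate: every edge of $\mathcal{L}^{-1}(G)$ joins two edge-adjacent faces of $G$, which must carry different colours, so $(\mathcal{B},\mathcal{C})$ is a valid bipartition.

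For connectedness I would take two vertices of $\mathcal{L}^{-1}(G)$ corresponding to non-$\mathcal{P}$ faces $A,A'$ and a dual walk $A=A_0,\ldots,A_k=A'$ in $G$ (available since $G$ is connected). Any $A_i\in\mathcal{P}$ can be bypassed: trivalency forces any two cyclically consecutive neighbours of $A_i$ to share the edge at their common vertex that does not lie on $\partial A_i$, and the $3$-colouring then makes these neighbours alternate between $\mathcal{B}$ and $\mathcal{C}$ around $A_i$ (the alternation is consistent because $|A_i|$ is even by Lemma \ref{extension}(2)). Replacing $A_i$ by the arc of non-$\mathcal{P}$ neighbours from $A_{i-1}$ to $A_{i+1}$ and iterating yields a walk inside $\mathcal{L}^{-1}(G)$, proving (1).

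For (2) and (3) I would exploit $\mathcal{L}(\mathcal{L}^{-1}(G))=G$: the leapfrog partitions the faces of $G$ into \emph{small faces}, one per face of $\mathcal{L}^{-1}(G)$ of equal size, and \emph{large faces}, one of size $2d$ per degree-$d$ vertex of $\mathcal{L}^{-1}(G)$. The small faces are exactly the elements of $\mathcal{P}=\mathcal{H}\cup\mathcal{Q}$, which are hexagons and quadrilaterals, giving (2). For (3), each vertex $v\in V(\mathcal{L}^{-1}(G))$ corresponds to a non-$\mathcal{P}$ (hence large) face $F_v$ of $G$ with $\deg(v)=|F_v|/2$; and $F_v$ is the expansion image of a unique non-resonant face $f$ of $F$ (a pentagon, or a hexagon outside $\mathcal{H}$). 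A direct boundary count through the expansion gives $|F_v|=|f|+b(f)$, where $b(f)$ is the number of edges of $M$ exiting $f$: each exiting $M$-edge adds one new edge to $\partial F_v$ (the one that replaces the incident vertex of $f$), each $M$-edge on $\partial f$ contributes one edge of the adjacent quadrilateral, and each non-$M$ edge of $\partial f$ survives. By Lemma \ref{prem}, $b(f)\in\{1,3,5\}$ for pentagons and $b(f)\in\{0,2,4,6\}$ for hexagons, so $|F_v|\in\{6,8,10,12\}$ and $\deg(v)\in\{3,4,5,6\}$ in every case, establishing (3).

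I expect the main obstacle to be the rerouting step for connectedness, which intertwines trivalency (consecutive neighbours of $A_i$ share an edge) with uniqueness of the face $3$-colouring (forcing the alternation around $A_i$). A secondary obstacle is the boundary count $|F_v|=|f|+b(f)$ underlying (3), a short but geometry-sensitive verification from the definition of the expansion operation.
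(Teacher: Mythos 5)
Your proof is correct and follows essentially the same route as the paper, which states Corollary \ref{parent} as an immediate consequence of the reverse-leapfrog construction and Lemma \ref{extension}: the faces of the parent correspond to the hexagons of $\mathcal{H}$ and the quadrilaterals of $\mathcal{Q}$, while the vertex degrees are half the sizes of the remaining (even, size $6$ to $12$) faces of the expansion. The details you supply --- the boundary count $|F_v|=|f|+b(f)$ combined with Lemma \ref{prem}, and the rerouting argument for connectedness --- are exactly what the paper leaves implicit, the only (negligible) divergence being that the paper would obtain bipartiteness from Lemma \ref{plane} via the even face sizes rather than from the face $3$-colouring.
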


\begin{figure}[tphb]
\begin{center}
\includegraphics{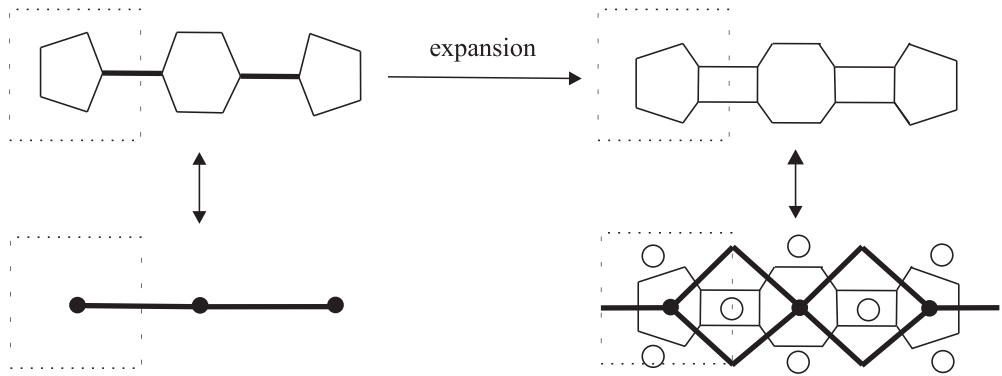}
\end{center}
   \caption{Illustration for the relation between degree of vertices in the $M$-assoiated graph  and the degree of the corresponding vertices in $\mathcal{L}^{-1}(\mathscr{E}(\mathcal{H},M))$.}
\protect\label{fig.14}
\end{figure}

Let $F$ be a fullerene and $(\mathcal{H},M)$ a  Clar cover of $F$.
For every face $f$ of $F$ such that there is at least one edge in $M$ exiting it, we allocate a vertex in the inner of it,  then connect two vertices with an edge if corresponding faces are connected by an edge of $M$ in $F$. The resulting graph is called the \textsf{$M$-associated graph}.

Clearly, each pentagon of $F$ corresponds to a vertex of odd degree in the $M$-associated graph, and each hexagon of $F$ corresponds to a vertex of even degree in the $M$-associated graph.  Since $F$ has exactly 12 pentagons and the other hexagons, the $M$-associated graph  has exactly 12 vertices of odd degree. Furthermore, each edge of $M$ in $F$ corresponds to an edge of the $M$-associated graph. Hence the $M$-associated graph  has $|M|$ edges.
It is clear that each vertex of the $M$-associated graph  corresponds to a vertex in $\mathcal{L}^{-1}(\mathscr{E}(\mathcal{H},M))$. To be more exact,
the
vertices of degree $1$ in the $M$-associated graph  correspond to
vertices of degree $3$ in $\mathcal{L}^{-1}(\mathscr{E}(\mathcal{H},M))$, the
vertices of degrees $2$ and $3$ in the $M$-associated graph
correspond to vertices of degree $4$ in $\mathcal{L}^{-1}(\mathscr{E}(\mathcal{H},M))$, the  vertices of degrees $4$ and $5$ in
the $M$-associated graph correspond to vertices of degree $5$ in $\mathcal{L}^{-1}(\mathscr{E}(\mathcal{H},M))$, and the  vertices of degree $6$
in the associated graph  correspond to  vertices of degree $6$ in $\mathcal{L}^{-1}(\mathscr{E}(\mathcal{H},M))$ (see Figure \ref{fig.14}). We can also see that each vertex in $\mathcal{L}^{-1}(\mathscr{E}(\mathcal{H},M))$ not corresponding to a vertex in the $M$-associated graph has degree 3.

The concept of the $M$-associated graph plays a crucial role in figuring out
the structure of a fullerene with prescribed Clar number. Here is a basic property of the $M$-associated graph.

\begin{lemma}\label{simple}
Let $F$ be a fullerene with a Clar cover $(\mathcal{H},M)$.
Then the $M$-associated graph is simple.
\end{lemma}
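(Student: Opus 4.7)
My plan is to show separately that the $M$-associated graph has neither loops nor parallel edges. Loop-freeness is essentially immediate: every $M$-edge is an edge of the fullerene $F$, and $F$, being a $2$-connected simple plane graph, has each of its edges lying on the boundary of exactly two distinct faces; consequently the edge of the $M$-associated graph coming from any $M$-edge connects two distinct vertices.

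The substantive part is excluding parallel edges, and here I would argue by contradiction using Lemma~\ref{cyclical}. Suppose two distinct edges $e_1, e_2 \in M$ both belong to $\partial f_1 \cap \partial f_2$ for the same pair of faces $f_1, f_2$ of $F$. I would first rule out $e_1$ and $e_2$ sharing a vertex: at a cubic vertex the three incident faces are distinct, so having $f_1$ and $f_2$ each incident to two edges at that vertex would force a repetition in the cyclic face order. Thus $e_1 = ab$ and $e_2 = cd$ have disjoint endpoints, and I can decompose $\partial f_i = e_1 \cup P_i \cup e_2 \cup P_i'$ for $i = 1, 2$, with $P_1, P_2$ joining $b$ to $c$ on opposite sides of $\{e_1, e_2\}$ and $P_1', P_2'$ joining $d$ to $a$. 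A simple closed curve drawn through the interiors of $f_1, f_2$ crossing only $e_1$ and $e_2$ separates the plane into two regions, placing $\{b, c\}$ on one side and $\{a, d\}$ on the other, so $F - \{e_1, e_2\}$ is already disconnected.

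To apply Lemma~\ref{cyclical} I still need each side of this $2$-edge-cut to contain a cycle, and this is the step I expect to be the main obstacle. Concretely, I would show that $P_1 \neq P_2$ and $P_1' \neq P_2'$ as edge sets. If, say, $P_1 = P_2$, then every edge of $P_1$ is shared by $f_1$ and $f_2$; combined with the shared edges $e_1, e_2$, some cubic vertex $v$ of $F$ then sees two consecutive edges of $\partial f_1$ both shared with $f_2$, which forces $f_2$ to occupy two angular sectors at $v$---equivalently, the remaining edge at $v$ becomes a bridge, impossible in a $2$-edge-connected graph such as $F$ (a consequence of Lemma~\ref{cyclical}). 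Hence $P_1 \neq P_2$, and similarly $P_1' \neq P_2'$, so $P_1 \cup P_2$ and $P_1' \cup P_2'$ each contain a cycle. This exhibits $\{e_1, e_2\}$ as a cyclical $2$-edge-cut of $F$, contradicting $c\lambda(F) = 5$ and completing the proof.
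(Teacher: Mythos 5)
Your proof rests on a misreading of the definition of the $M$-associated graph, and this invalidates both halves of the argument. An edge $e=uv\in M$ does \emph{not} give rise to an edge of the $M$-associated graph joining the two faces whose boundaries contain $e$; it joins the two faces that $e$ \emph{exits}, i.e.\ the faces sharing exactly one vertex with $e$. Since $F$ is cubic, these are the third face at $u$ (the one incident with $u$ but not having $e$ on its boundary) and the third face at $v$. This reading is forced by the surrounding text: the degree of a face in the $M$-associated graph is the number of $M$-edges exiting it, which is exactly how Lemma~\ref{prem} is used to conclude that pentagons become vertices of odd degree.

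Consequently your loop case is not ``essentially immediate'': a loop corresponds to an edge $uv\in M$ both of whose endpoints lie on a single face $f$ with $uv\notin\partial f$, i.e.\ a chord-like edge between two non-consecutive vertices of a face boundary. The observation that $e$ separates two distinct faces of $F$ says nothing about this configuration, and excluding it genuinely requires Lemma~\ref{cyclical} (such a chord across a pentagon or hexagon creates a cycle of length at most $4$, hence a cyclic edge-cut of size less than $5$). Likewise, your parallel-edge configuration is the wrong one: a double edge means two matching edges $e_1,e_2$, each having one endpoint whose third face is $f_1$ and one endpoint whose third face is $f_2$, with neither $e_i$ lying on $\partial f_1$ or $\partial f_2$ (indeed $f_1$ and $f_2$ need not be adjacent at all). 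Your entire analysis of $\partial f_1\cap\partial f_2\supseteq\{e_1,e_2\}$, the closed curve through the interiors of $f_1$ and $f_2$ crossing only $e_1$ and $e_2$, and the paths $P_i,P_i'$ therefore does not address the lemma. The paper's proof treats both cases uniformly: each forbidden configuration yields a cyclic edge-cut of size less than five (for the double edge, use the cycle formed by $e_1$, $e_2$ and arcs of $\partial f_1$ and $\partial f_2$ between their endpoints), contradicting $c\lambda(F)=5$. Your instinct to lean on Lemma~\ref{cyclical} for the substantive case is correct, but it must be applied to the right configurations.
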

\begin{proof}
Suppose to the contrary that the $M$-associated graph contains
multiple edges or loops.
If the $M$-associated graph contains
multiple edges, then there are two  edges
connecting two non-adjacent faces  of $F$.
If the $M$-associated graph contains
 loops, then there is an edge connecting two
non-adjacent vertices of some face of $F$.  In each case, it is easy to
find a cyclic edge-cut of size less than five in $F$, which contradicts
 that $F$ is cyclically 5-edge-connected by Lemma \ref{cyclical}.
\end{proof}

Suppose $F$ is a fullerene with $n\equiv 2\pmod 6$
vertices and $c(F)=\lfloor n/6\rfloor-2$. Let $(\mathcal{H},M)$ be a Clar structure of $F$. Then we have the following result.
\begin{lemma}\label{6k+2rest}
$|M|=7$ and each component of the $M$-associated graph
has $2$ or $4$ vertices of odd degree.
\end{lemma}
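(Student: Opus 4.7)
The strategy is to extract $|M|=7$ from a vertex count, then use Lemma \ref{prem} together with the handshake identity to pin down the possible degree sequences of the $M$-associated graph $A$ very tightly, and finally apply the elementary connectedness bound $e\ge v-1$ on each component to force its number of odd-degree vertices to be $2$ or $4$. The argument is essentially a discrete handshake calculation, with the only subtlety being to rule out a would-be isolated-hexagon component.

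Since $(\mathcal{H},M)$ is a Clar structure and every vertex of $F$ lies either in a resonant hexagon of $\mathcal{H}$ or on an $M$-edge, I would write $n=6|\mathcal{H}|+2|M|$; substituting $n=6k+2$ and $|\mathcal{H}|=c(F)=k-2$ yields $|M|=7$ immediately. By Lemma \ref{prem} every pentagon contributes a vertex of odd degree (in $\{1,3,5\}$) to $A$, while any other vertex of $A$ arises from a non-resonant hexagon with a positive even number of exiting $M$-edges and hence has degree in $\{2,4,6\}$. Letting $h$ denote the number of such hexagon vertices, the handshake identity $2|M|=14$ combined with the lower bound $12+2h$ on the degree sum forces $h\in\{0,1\}$; a brief arithmetic check shows that if $h=0$ then the pentagon degrees form the multiset $\{3,1^{11}\}$, while if $h=1$ every pentagon has degree $1$ and the unique hexagon vertex has degree exactly $2$.

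For the final step, let $C$ be a component of $A$ with $v_0$ pentagon vertices, $h_C\in\{0,1\}$ hexagon vertices, and $e_0$ edges. Connectedness gives $e_0\ge v_0+h_C-1$, while handshake gives $2e_0=\sum_{v\in V(C)}\deg_A(v)$, which by the degree sequences above is at most $v_0+2$ when $h=0$ (only one pentagon globally has degree $3$) and equal to $v_0+2h_C$ when $h=1$; combining these bounds yields $v_0\le 4$ in the first case and $v_0\le 2$ in the second. Lemmas \ref{prem} and \ref{simple} together exclude isolated vertices (each pentagon has degree $\ge 1$, each hexagon vertex has degree $\ge 2$, and $A$ is simple), so every component has at least two vertices and in fact $v_0\ge 2$; together with the parity statement that $v_0$ is even (handshake applied within $C$), this confines $v_0$ to $\{2,4\}$, as required. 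The main obstacle I foresee is not computational but bookkeeping: one must carefully match the degree-sum equality with the connectedness inequality in each of the two cases for $h$, and verify that the degenerate possibility of a lone-hexagon component is excluded before the inequalities can be applied.
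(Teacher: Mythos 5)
Your proof is correct, and while it shares the basic skeleton with the paper's argument ($|M|=7$ from the vertex count, the handshake identity, and the spanning-tree bound $e\ge v-1$ on components), the route through the middle is genuinely different. The paper argues directly on the number of odd-degree vertices per component: a component with none would contain a cycle and hence (by simplicity) at least $3$ edges, while the remaining components, carrying all $12$ odd-degree vertices, need at least $6$ edges, exceeding $7$; a component with $6$ or $8$ odd-degree vertices is excluded by the same kind of edge count. You instead first pin down the \emph{global} degree multiset of the $M$-associated graph --- at most one hexagon vertex, and degrees $\{3,1^{11}\}$ or $\{2,1^{12}\}$ --- and only then run the per-component tree bound. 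Your intermediate step is stronger than what the lemma asks for: it essentially delivers for free the classification of the components as $P_2$, $P_3$ or $K_{1,3}$, which the paper only derives in the discussion following the lemma, and it replaces the paper's cycle argument for excluding all-even components by the simple observation that such a component would have to be a single isolated hexagon vertex (impossible since every vertex of the associated graph has positive degree and, by Lemma \ref{simple}, no loops). The one place where your write-up is compressed is the jump from ``every component has at least two vertices'' to ``$v_0\ge 2$'': this needs the fact $h\le 1$ (so a component cannot consist entirely of hexagon vertices) together with the parity of $v_0$, both of which you do have in hand, so the gap is only expository.
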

\begin{proof}
 It follows directly from $c(F)=\lfloor n/6\rfloor-2$ and
$n\equiv 2\pmod 6$ that $|M|=7$.
 Suppose to the contrary that there is a component $G$ of
the $M$-associated graph  having less than $2$ vertices of odd degree.
Since $G$ has an even number of vertices of odd degree, there are no vertices of odd degree in $G$.
Thus $G$ contains a cycle. By Lemma \ref{simple}, $G$ contains at least 3 edges.
Since  each
component of the $M$-associated graph with $k$ $(k\geq2)$ vertices of odd degree  has at least $k-1\geq \frac{k}{2}$ edges and the $M$-associated graph has exactly 12 vertices of odd degree, the components of the $M$-associated graph other than $G$ have at least $6$ edges.
Hence $M$ has at least 9 edges, which contradicts that $|M|=7$.

On the other hand, suppose to the contrary that there is a component $G$ of
the $M$-associated graph  having more than $4$ vertices of odd degree.
Since the number of vertices of odd degree in $G$ is even, $G$ has at least 6 vertices of odd degree.
Because $|M|=7$, $G$ has at most 8 vertices of odd degree.
If $G$ has exactly 6 vertices of odd degree, then $G$ at least 5 edges. All the other components of the $M$-associated graph  have exactly $6$ vertices of odd degree,
and thus have at least 3 edges. It follows that $M$ has at least $8$ edges,
which contradicts that $M$ has exactly $7$ edges.
If $G$ has exactly $8$ vertices of odd degree, then $G$ has at least $7$ edges, and further
the other components of the $M$-associated graph  have at least 2 edges, which contradicts that
$M$ has exactly $7$ edges.
\end{proof}



By Lemma \ref{6k+2rest},  each component of the $M$-associated graph
has  $2$ or $4$ vertices of odd degree.
If a component $G$ of the $M$-associated graph
has exactly $2$ vertices of odd degree, then the other components of the $M$-associated graph other than $G$
have exactly $10$ vertices of odd degree and at least 5 edges. Since $|M|=7$, $G$ has at most 2 edges. Hence $G$ is $P_2$ or $P_3$.
If a component $G$ of the $M$-associated graph
has exactly $4$ vertices of odd degree, then the other components of the $M$-associated graph other than $G$
have exactly $8$ vertices of odd degree and at least 4 edges. Since $|M|=7$, $G$ has at most 3 edges.
Since $G$ is connected, $G$ has at least 3 edges. Hence $G$ has exactly 3 edges and $G$ is $K_{1,3}$.
So all possible components of the  $M$-associated graph
     are $P_2, P_3,$ and $K_{1,3}$. Suppose
the $M$-associated graph has $n_1$ copies of $P_2$, $n_2$ copies of $P_3$ and $n_3$ copies of $K_{1,3}$ as its components. We have a system of linear indeterminate  equations

\begin{equation}
\left\{\begin{array}{ll} n_1+2n_2+3n_3=7;\\[2ex]
2n_1+2n_2+4n_3=12.
\end{array}
\right.
\end{equation}
 Solving it, we
have the following two solutions: (1) $n_1=5,n_2=1, n_3=0$;
 (2)  $n_1=4,n_2=0, n_3=1$.

In order to prove Theorem \ref{new}, it suffices to prove the following result.
\begin{thm}\label{6k+2nonexist}
Let $F$ be a fullerene with $n\equiv 2\pmod 6$
vertices. Then $c(F)\neq\lfloor\frac{n}{6}\rfloor-2$.
\end{thm}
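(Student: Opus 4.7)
I will argue by contradiction: assume $F$ has $n = 6k+2$ vertices with $c(F) = k-2$, fix a Clar structure $(\mathcal{H},M)$ (so $|\mathcal{H}|=k-2$ and $|M|=7$ by Lemma~\ref{6k+2rest}), and extract a divisibility obstruction from the parent graph $P := \mathcal{L}^{-1}(\mathscr{E}(\mathcal{H},M))$. By Corollary~\ref{parent}, $P$ is a connected plane bipartite graph whose faces are all quadrilaterals or hexagons and whose vertex degrees all lie in $\{3,4,5,6\}$; it has $|M|=7$ quadrilateral faces and $|\mathcal{H}|=k-2$ hexagonal faces.

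The first step is to pin down the degree sequence of $P$ via Euler's formula. Summing face sizes gives $2|E(P)| = 4\cdot 7 + 6(k-2) = 6k+16$, so $|E(P)| = 3k+8$, and Euler's formula on the sphere then gives $|V(P)| = 2k+5$. Because every degree is at least $3$, the identity $\sum_{v}(\deg v - 3) = 2|E(P)| - 3|V(P)| = 1$ admits only the solution \emph{one degree-$4$ vertex and $2k+4$ degree-$3$ vertices}. The second step is a short bipartite parity argument. Let $V(P) = A \sqcup B$ be a bipartition. Each edge contributes exactly one endpoint to $A$, so $\sum_{v \in A}\deg v = |E(P)| = 3k+8$. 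Depending on which side contains the unique degree-$4$ vertex, this sum equals either $3|A|$ or $3|A|+1$, forcing $|A| \in \{(3k+8)/3,\,(3k+7)/3\}$; since $3k+8 \equiv 2$ and $3k+7 \equiv 1 \pmod 3$, neither value is an integer, and this contradiction finishes the proof.

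The main obstacle is conceptual rather than technical: one must notice that the bipartite structure of $P$ alone already supplies a parity obstruction, so the geometric case analysis on components of the $M$-associated graph developed in the preamble (the two solutions $5P_2 \cup P_3$ and $4P_2 \cup K_{1,3}$) is not needed for this theorem. It is reassuring, however, that both cases are consistent with the forced structure of $P$: the $P_3$-centre is a hexagon with two $M$-exits, and the $K_{1,3}$-centre is a pentagon with three $M$-exits, each producing the unique degree-$4$ vertex of $P$. The parity argument then rules out both uniformly.
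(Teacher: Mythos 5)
Your proof is correct, and it reaches the same final contradiction as the paper (the bipartite degree-sum count $\sum_{v\in A}\deg v=\sum_{v\in B}\deg v=|E|$ failing modulo $3$ because of a single vertex of degree $4$), but you arrive at the key intermediate fact by a genuinely different and cleaner route. The paper establishes that $\mathcal{L}^{-1}(\mathscr{E}(\mathcal{H},M))$ has exactly one vertex of degree $4$ and the rest of degree $3$ by first classifying the components of the $M$-associated graph: it solves the indeterminate system to show the $M$-associated graph must be $P_3\cup 5P_2$ or $K_{1,3}\cup 4P_2$, and then reads off the parent's degree sequence from the degree correspondence. You instead get the same degree sequence directly from Euler's formula: with $7$ quadrilateral and $k-2$ hexagonal faces (Corollary~\ref{parent}), $|E|=3k+8$ and $|V|=2k+5$, so $\sum_v(\deg v-3)=1$, and minimum degree $3$ forces exactly one degree-$4$ vertex. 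This bypasses Lemma~\ref{simple}, the component-counting in Lemma~\ref{6k+2rest} (beyond $|M|=7$, which is immediate from $n=6k+2$ and $|\mathcal{H}|=k-2$), and the enumeration of solutions; what the paper's longer route buys is the explicit list of possible $M$-associated graphs, which it reuses in the structural characterizations of Sections 4 and 5, whereas your Euler-formula shortcut is the more economical argument if one only wants Theorem~\ref{6k+2nonexist} itself. Your closing consistency check (hexagon centre of $P_3$ with two exits, pentagon centre of $K_{1,3}$ with three exits, via Lemma~\ref{prem}) is accurate but, as you say, not needed.
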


\begin{proof}
Suppose to the contrary that $c(F)=\lfloor\frac{n}{6}\rfloor-2$. Let $(\mathcal{H},M)$ be a
Clar structure of $F$. Then by the discussion before, the $M$-associated graph is either $P_3\cup 5P_2$ or $K_{1,3}\cup 4P_2$.  In either case, $\mathcal{L}^{-1}(\mathscr{E}(\mathcal{H},M))$  has one vertex of degree $4$  and
 the other vertices of degree $3$. Moreover, $\mathcal{L}^{-1}(\mathscr{E}(\mathcal{H},M))$  consists
seven quadrilaterals and the other  hexagons.


In what follows, we are going to prove such a graph does not exist.
Suppose there is a plane graph $G$ satisfying the above property.  Since  all faces of $G$ are of even size, by Lemma \ref{plane}, $G$ is a bipartite graph.
 Moreover, the connectivity of $G$ guarantees that
 the bipartition is unique. Suppose $G=(A,B)$.
 Without loss of generality, we may assume that the unique 4-degree
vertex is contained in $A$. Then $|E(G)|=3|A|+1=3|B|$, a contradiction.
\end{proof}


\section{Extremal fullerenes with $n\equiv 4\pmod 6$
vertices}
In this section, we give a graph-theoretical characterization of the extremal fullerenes whose
order is congruent to 4  modulo 6. The following lemma is a counterpart of Lemma \ref{6k+2rest}. The proof
is analogous to the corresponding proof of Lemma \ref{6k+2rest} and is omitted
here.

\begin{lemma}\label{6k+4rest}
Let $F$ be an extremal fullerene with $n\equiv 4\pmod 6$
vertices and $(\mathcal{H},M)$ a Clar structure of $F$. Then $|M|=8$ and
 each component of the $M$-associated graph has  $2$, $4$ or $6$ vertices of odd degree.

\end{lemma}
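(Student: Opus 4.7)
The plan is to mirror the proof of Lemma~\ref{6k+2rest} almost verbatim, with the numerical bounds shifted to reflect the residue $n\equiv 4\pmod 6$. First I would establish $|M|=8$ by counting vertices: the $c(F)=\lfloor n/6\rfloor-2$ hexagons of $\mathcal{H}$ cover $6c(F)$ vertices and the edges of $M$ cover the remaining $2|M|$ vertices, so $6(\lfloor n/6\rfloor-2)+2|M|=n$; writing $n=6k+4$ yields $|M|=8$.

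Two recurring ingredients drive the component count. By Lemma~\ref{simple} the $M$-associated graph is simple, so any component containing a cycle has at least three edges. Also, any connected subgraph with $k\ge 2$ vertices of odd degree has at least $k-1\geq k/2$ edges, since it contains at least $k$ vertices and is connected. Recall that the $M$-associated graph has exactly $12$ vertices of odd degree, one per pentagon of $F$.

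For the lower bound on odd-degree vertices per component, I would suppose for contradiction that some component $G$ has fewer than two odd-degree vertices; as that number must be even, $G$ has none, hence $G$ contains a cycle and contributes at least three edges. The remaining components together carry all $12$ odd-degree vertices and so contribute at least $12/2=6$ edges by the second ingredient, forcing $|M|\geq 9$, which contradicts $|M|=8$. For the upper bound, I would suppose that some component $G$ has more than six odd-degree vertices, i.e.\ $8$, $10$, or $12$. If $G$ has $10$ or $12$, then $G$ alone already needs $\geq 9$ edges, contradicting $|M|=8$. If $G$ has exactly $8$, then $G$ needs at least $7$ edges while the remaining $4$ odd-degree vertices, distributed among the other components, force at least $2$ further edges, again giving $|M|\geq 9$.

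The main obstacle is only bookkeeping: there are now three sub-cases at the upper end rather than two in Lemma~\ref{6k+2rest}, and one must keep in mind that the bound ``at least $k-1$ edges'' applies only to components actually carrying odd-degree vertices, so the hypothetical cycle-containing component in the lower-bound argument has to be handled separately via Lemma~\ref{simple}. Apart from that, the arithmetic is completely parallel to the $n\equiv 2\pmod 6$ case with $|M|$ increased from $7$ to $8$ and the maximum allowed number of odd-degree vertices per component increased from $4$ to $6$.
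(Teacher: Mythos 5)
Your proposal is correct and follows exactly the route the paper intends: the paper omits the proof of this lemma, stating only that it is analogous to that of Lemma~\ref{6k+2rest}, and your argument is precisely that analogue carried out in full (the vertex count giving $|M|=8$, the cycle-plus-simplicity argument via Lemma~\ref{simple} for the lower bound, and the $k-1\geq k/2$ edge count for the upper bound, with the extra sub-case at $8$, $10$, or $12$ odd-degree vertices handled correctly).
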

By Lemma \ref{6k+4rest},
we can enumerate all possible components of
the $M$-associated graph.
All
possible components of
the $M$-associated graph  are $P_2, P_3, P_4, K_{1,3}, K_{1,4}$ and $K_{1,5}$. Suppose
the $M$-associated graph has $n_1$ copies of $P_2$, $n_2$ copies of $P_3$, $n_3$ copies of $P_4$,
$n_4$ copies of $K_{1,3}$, $n_5$ copies of $K_{1,4}$ and $n_6$ copies of $K_{1,5}$ as its components. We have a system of linear indeterminate  equations.
\begin{equation}\label{equ3}
\left\{\begin{array}{ll}
n_1+2n_2+3n_3+3n_4+4n_5+5n_6=8;\\[2ex]
2n_1+2n_2+2n_3+4n_4+4n_5+6n_6=12.
\end{array}
\right.
\end{equation}
Solving it, we have the
following 6 solutions: (1) $n_6=1,n_1=3, n_i=0, i\neq1,6$; (2)
$n_5=1,n_1=4, n_i=0, i\neq1,5$; (3) $n_4=2,n_1=2, n_i=0, i\neq1,4$;
(4) $n_4=1,n_2=1, n_1=3, n_i=0, i\neq1,2,4$; (5) $n_3=1,n_1=5, n_i=0, i\neq1,3$;
(6) $n_2=2,n_1=4, n_i=0, i\neq1,2$. Each solution of
the system of linear indeterminate  equations (\ref{equ3}) corresponds to a
possible $M$-associated graph. Now it is time to determine
the structure of $\mathcal{L}^{-1}(\mathscr{E}(\mathcal{H},M))$.



\begin{thm}\label{6k+4}
Let $F$ be an extremal fullerene with $n\equiv 4\pmod 6$
vertices and $(\mathcal{H},M)$ a Clar structure of $F$. Then  $\mathcal{L}^{-1}(\mathscr{E}(\mathcal{H},M))$
is a plane graph with
$(n+14)/3$ vertices satisfying (i) It consists exactly $8$
quadrilaterals and the other hexagons,
 (ii) It has exactly two vertices of degree $4$ and the other
vertices of degree $3$,  (iii) Each vertex of degree $4$ is incident with at least
$2$ quadrilaterals, and (iv) Each partite set contains one
vertex of degree $4$.
\end{thm}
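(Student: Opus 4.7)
My plan is to combine the degree correspondence of Figure \ref{fig.14} with the bipartite-parity trick from Theorem \ref{6k+2nonexist}, applied to the six candidate $M$-associated graphs listed after Lemma \ref{6k+4rest} (the solutions of system (\ref{equ3})). Set $P := \mathcal{L}^{-1}(\mathscr{E}(\mathcal{H},M))$; by Corollary \ref{parent}, $P$ is a connected plane bipartite graph whose faces are quadrilaterals or hexagons and whose vertices have degree between $3$ and $6$. Let $(A,B)$ denote the (unique) bipartition of $P$.

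First I would pin down the degree sequence of $P$, simultaneously ruling out two of the six cases and establishing (ii). Translating each $M$-associated graph via Figure \ref{fig.14}: cases (1) ($K_{1,5}+3P_2$) and (2) ($K_{1,4}+4P_2$) produce a single vertex of degree $5$ in $P$ with all others of degree $3$; cases (3)--(6) each produce exactly two vertices of degree $4$ and the remaining vertices of degree $3$. To eliminate (1) and (2), I reprise the argument of Theorem \ref{6k+2nonexist}: if the degree-$5$ vertex lies in $A$ then $|E(P)| = 5 + 3(|A|-1) = 3|A|+2$, while also $|E(P)| = 3|B|$, forcing the impossible $|B|-|A| = \tfrac{2}{3}$. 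Hence only cases (3)--(6) survive, giving (ii).

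Property (iv) is an immediate application of the same parity trick: if both degree-$4$ vertices lay in $A$, then $|E(P)| = 8 + 3(|A|-2) = 3|A|+2 = 3|B|$, again forcing $|B|-|A| = \tfrac{2}{3}$, a contradiction. Hence each part contains exactly one degree-$4$ vertex, and in fact $|A|=|B|$. The face count (i) is read off from Corollary \ref{parent}: $|M|=8$ quadrilateral faces and $|\mathcal{H}|=c(F)=(n-16)/6$ hexagonal faces. Combining $2|E(P)|=3|V(P)|+2$ with Euler's formula and the face total $F = 8 + (n-16)/6 = (n+32)/6$ then yields $|V(P)|=(n+14)/3$.

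Finally, for (iii), fix a vertex $v_f$ of degree $4$ in $P$; by Figure \ref{fig.14} the corresponding face $f$ of $F$ has $k_f \in \{2,3\}$ exiting $M$-edges. For each such exiting edge $e=uv$ with $v \in \partial f$, widening $e$ in $\mathscr{E}(\mathcal{H},M)$ produces a quadrilateral $F_e$ whose ``short'' side at $v$ lies on $\partial f$, so $F_e$ and $f$ share an edge in $\mathscr{E}$. Under the reverse-leapfrog construction this shared edge becomes a vertex-face incidence of $v_f$ with the quadrilateral face $F_e$ in $P$; distinct exiting edges yield distinct quadrilaterals, so $v_f$ is incident with at least $k_f \ge 2$ quadrilaterals, proving (iii). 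The main obstacle here is precisely this last step---verifying that ``short-side'' adjacency in the expansion translates into a vertex-face incidence (not merely an edge) in the reverse leapfrog; once the truncation-of-dual description of leapfrog is unpacked, the rest reduces to the inequality $k_f \ge 2$.
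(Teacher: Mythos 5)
Your proposal is correct and follows essentially the same route as the paper: case analysis over the six solutions of system (\ref{equ3}) via the degree correspondence of Figure \ref{fig.14}, with the bipartite edge-count parity argument eliminating the degree-$5$ cases (Solutions (1) and (2)) and forcing the two degree-$4$ vertices into different partite sets (which is exactly how the paper disposes of Solution (5)). Your added details --- the Euler-formula verification of the vertex count $(n+14)/3$ and the explicit ``exiting edge gives an incident quadrilateral'' justification of (iii), which the paper only asserts --- are correct elaborations rather than a different method.
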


\begin{proof}
Consider all the 6  possible $M$-associated graphs corresponding to all the 6 solutions of the
system of linear indeterminate  equations (\ref{equ3}).
 For each possible $M$-associated graph, $\mathcal{L}^{-1}(\mathscr{E}(\mathcal{H},M))$
consists exactly 8 quadrilaterals and the other  hexagons.

 For each possible $M$-associated graph
 corresponding to Solutions (1) and (2) of the
system of linear indeterminate  equations (\ref{equ3}),
 $\mathcal{L}^{-1}(\mathscr{E}(\mathcal{H},M))$ has
 exactly one vertex of degree $5$ and the other vertices of degree $3$.
 We will show that such a graph does not exist. If not, suppose $G$ is such a plane graph.  Since
all faces of $G$ are of even size, by Lemma \ref{plane}, $G$ is a bipartite graph. Because $G$ is
connected, the bipartition of $G$ is unique. Suppose $G=(A,B)$.
Without loss of generality, we may assume that the unique
vertex of degree $5$ is contained in $A$. Then $|E(G)|=3|A|+2=3|B|$,  a contradiction.

 For each possible $M$-associated graph
 corresponding to the other solutions of the
system of linear indeterminate  equations (\ref{equ3}),
 $\mathcal{L}^{-1}(\mathscr{E}(\mathcal{H},M))$ has exactly two
  vertices of degree $4$ and the other vertices of degree $3$. Amongst these, for each possible $M$-associated graph
 corresponding to Solution (5), two vertices of degree
$4$ in $\mathcal{L}^{-1}(\mathscr{E}(\mathcal{H},M))$ belong to the same partite set, whereas for each possible $M$-associated graph corresponding to
 the other solutions, two vertices of degree
$4$ in $\mathcal{L}^{-1}(\mathscr{E}(\mathcal{H},M))$ belong to different partite sets.
We claim that  two  vertices of degree 4 in $\mathcal{L}^{-1}(\mathscr{E}(\mathcal{H},M))$
belong to different partite sets.
Suppose to the contrary that two 4-degree
vertices of $G$ belong to the same partite set, say $A$. Then
$|E(G)|=3|A|+2=3|B|$, a contradiction.
  Moreover, we can see that each vertex of degree $4$ is incident with at least
$2$ quadrilaterals.
\end{proof}

By the proof of Theorem \ref{6k+4}, for an extremal fullerene $F$ with $n\equiv 4\pmod 6$ vertices and a Clar structure $(\mathcal{H},M)$ of  $F$, the
$M$-associated graph is one of the following three graphs: $2K_{1,3}\cup2P_2$, $K_{1,3}\cup P_3\cup3P_2$ and $2P_3\cup4P_2$.

 Let $G$ be a plane graph which satisfies (i),(ii),(iii) and (iv) in
 Theorem \ref{6k+4}.
 We want to determine the conditions under which
$G$ can be
 transformed into an extremal fullerene.
First we know that $\mathcal {L}(G)$ is a trivalent plane
graph  satisfying $(i')$ It consists of exactly $2$ octagons, $8$
quadrilaterals and the other hexagons,
  $(ii')$ All quadrilaterals of it lie in the same perfect Clar structure, and
  $(iii')$ Each octagon is adjacent to at least two quadrilaterals.
  For each quadrilateral, we select a pair of opposite edges and contract
  it into an edge. Clearly, the resulting graph is trivalent. Since
  quadrilaterals in $G$ correspond to
quadrilaterals in $\mathcal {L}(G)$, selecting a pair of opposite edges
from a quadrilateral in $\mathcal {L}(G)$ is  equivalent to selecting a
pair of faces in $\mathcal {L}(G)$ which are connected by this pair of
opposite edges. Thus it is further equivalent to choose distinct
diagonal vertices from corresponding quadrilateral in $G$. Since the
size of each face connected by a pair of opposite edges in $\mathcal
{L}(G)$ is decreased by one after contraction, in order to make the
size of each face of the resulting graph be five or six, for each
octagon, we should select two or three pair of opposite edges
exiting it. We can contract $\mathcal {L}(G)$ into an extremal
fullerene exactly when a generalized  diagonalization of $G$ is
possible. Together with Theorem \ref{6k+4}, we have the following
graph-theoretical characterization for extremal fullerenes with $6k+4$ vertices.

\begin{figure}[h]
    \begin{center}
\includegraphics{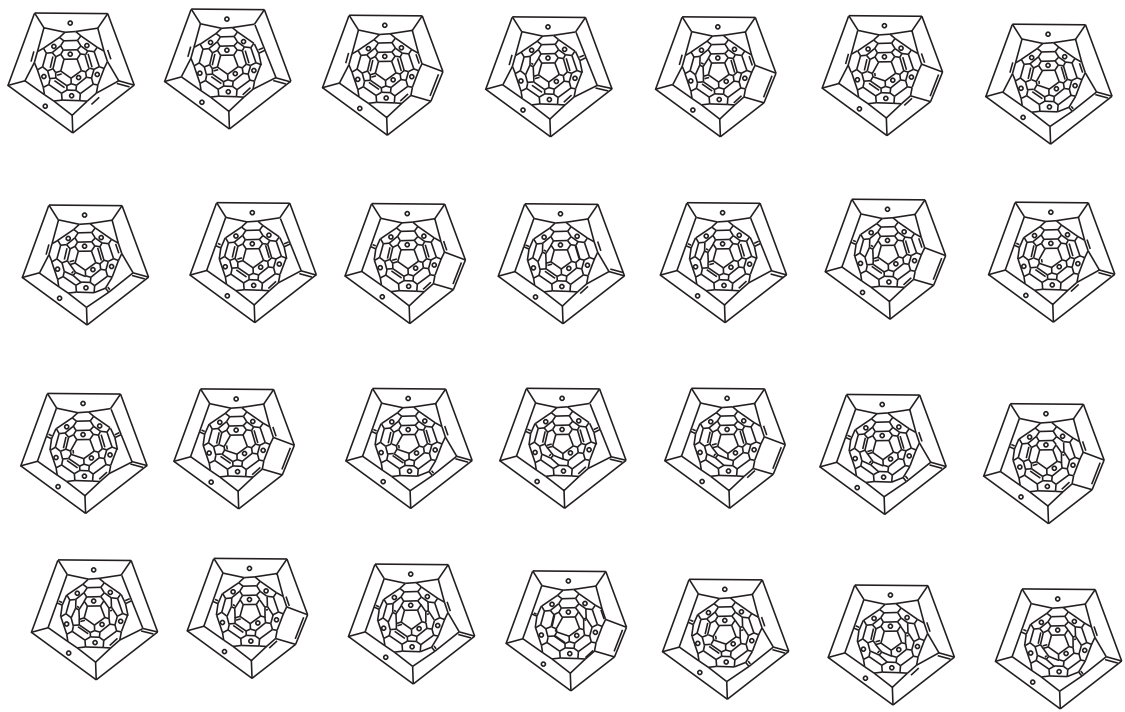}
\end{center}
   \caption{Extremal fullerenes generated from different generalized
   diagonalizations of the plane graph in Figure \ref{fig.13} (a).}
\protect\label{fig.10}
\end{figure}

\begin{thm}\label{6k+4ch}
The extremal fullerenes with $n\equiv 4\pmod 6$ vertices are in
one-to-one correspondence with the generalized diagonalized plane
graphs with $(n+14)/3$ vertices satisfying $(i)$ It consists
exactly $8$ quadrilaterals and the other hexagons,
 (ii)  It has exactly two vertices of degree $5$ and the other
vertices of degree $3$,  (iii) Each vertex of degree $4$ is incident with at least
$2$ quadrilaterals, and (iv) Each partite set of $G$ contains one
vertex of degree $4$.
\end{thm}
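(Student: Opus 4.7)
The plan is to establish the claimed bijection by defining a forward map and a reverse map, and then checking they invert each other. Almost all the preparation is already done: Theorem \ref{6k+4} fixes the graph-theoretic constraints (i)--(iv) on the parent graph $G$, while the discussion preceding the statement explains the leapfrog--and--contract procedure that turns a diagonalized $G$ back into an extremal fullerene. The proof is essentially a bookkeeping of these pieces, together with a natural identification of the generalized diagonalization with data coming from the $M$-associated graph.

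For the forward direction, I would let $F$ be an extremal fullerene with $n\equiv 4\pmod 6$ vertices, fix a Clar structure $(\mathcal{H},M)$, and put $G:=\mathcal{L}^{-1}(\mathscr{E}(\mathcal{H},M))$. By Theorem \ref{6k+4}, $G$ already satisfies (i)--(iv). To promote $G$ to a generalized diagonalized graph, I read off the diagonalization from $(\mathcal{H},M)$: each $e\in M$ widens to a quadrilateral face $Q_e$ of $G$, and I assign as its two chosen diagonal vertices the vertices of $G$ corresponding to the two faces of $F$ incident with $e$. Then a vertex $v$ of $G$ is chosen exactly $\deg_T(f_v)$ times, where $T$ is the $M$-associated graph and $f_v$ is the face of $F$ corresponding to $v$. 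Using Lemma \ref{6k+4rest} together with the three surviving solutions of system (\ref{equ3}) identified right after Theorem \ref{6k+4}, this count equals $2$ or $3$ at each degree-$4$ vertex of $G$ and at most $1$ at each degree-$3$ vertex, which is the precise requirement of a generalized diagonalization.

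For the reverse direction, I would take $G$ satisfying (i)--(iv) with a prescribed generalized diagonalization and run the construction from the paragraphs preceding the statement. By Lemma \ref{leap} and a face count, $\mathcal{L}(G)$ is trivalent with exactly $8$ quadrilaterals, $2$ octagons, and the other faces hexagons; its quadrilaterals form a perfect Clar structure, and condition (iii) forces every octagon to be adjacent to at least two of them. The diagonalization of $G$ selects, for each quadrilateral $Q'$ of $\mathcal{L}(G)$, the pair of opposite edges incident with the two chosen vertex-faces; contracting all such pairs simultaneously shrinks each quadrilateral to a single edge, and a vertex-face of a degree-$k$ vertex of $G$ chosen $c$ times loses exactly $c$ edges, so its new size is $2k-c\in\{5,6\}$ under the diagonalization constraints. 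The resulting graph is therefore a trivalent plane graph whose only faces are pentagons and hexagons, i.e., a fullerene; its $|\mathcal{H}|$ surviving hexagonal face-faces, together with the perfect matching produced by the contracted quadrilaterals, form a Clar cover witnessing $c(F)\ge \lfloor n/6\rfloor-2$, with equality by Theorem \ref{new}. Mutual inversion of the two maps is essentially tautological, since the $|M|$ widened quadrilaterals match the $|M|$ contractions and the diagonal vertices coincide with the faces across $M$-edges.

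The main technical obstacle I anticipate lies in the reverse step: one must verify that the simultaneous contractions produce no triangle, heptagon, multi-edge, or cyclic edge-cut of size less than five, so that the output is a genuine fullerene and not a degenerate plane graph. Here conditions (iii) and (iv), together with the bipartition argument used in the proofs of Theorem \ref{6k+2nonexist} and Theorem \ref{6k+4} and Lemma \ref{cyclical} applied to the resulting graph, are the key hypotheses that rule out these pathologies.
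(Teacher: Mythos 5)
Your proposal follows essentially the same route as the paper: the forward direction is exactly Theorem \ref{6k+4} together with reading a generalized diagonalization off the Clar structure, and the reverse direction is the leapfrog--contract procedure the paper describes in the paragraph preceding the statement, so the two arguments match in substance (the paper is in fact no more rigorous than you are about verifying that the contraction yields a genuine fullerene). One slip to fix in your forward map: the chosen diagonal pair of the quadrilateral $Q_e$ must be the two faces of $F$ that $e$ \emph{exits} (equivalently, the two faces joined by $e$ in the $M$-associated graph), not the two faces whose boundary contains $e$ --- it is the exited faces whose size grows under expansion and must shrink under contraction, and only with that choice is your count ``$v$ is chosen exactly $\deg_T(f_v)$ times'' correct and the two maps mutually inverse.
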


As an example, consider a graph illustrated in Figure \ref{fig.13}
(a). This graph satisfies Conditions (i), (ii), (iii) and (iv)
in Theorem \ref{6k+4}. Further, it  has multiple disjoint two
quadrilaterals and two copies of $T$ (see Figure \ref{fig.13} (c))
as  subgraphs. Since a quadrilateral has two ways to diagonalize and a copy of
$T$ has four ways to diagonalize, we have totally 64 different generalized
diagonalization of $G$. Each corresponds to an extremal fullerene.
Eliminating the isomorphic fullerenes from these 64 extremal
fullerenes, we have 28 multiple non-isomorphic extremal fullerenes
(see Figure \ref{fig.10}), including the experimentally produced
C$_{70}$:1$(D_{5h})$.

\section{Extremal fullerenes with $n\equiv 2\pmod 6$
vertices}

In this section, we study the extremal fullerenes whose
order is congruent to 2  modulo 6. By a similar argument as in Lemma \ref{6k+2rest}, we have the following result.

\begin{lemma}\label{2pen}
Let $F$ be an extremal fullerene with $n\equiv 2\pmod 6$
vertices and $(\mathcal{H},M)$ a Clar structure of $F$. Then $|M|=10$ and
each component of the $M$-associated graph has at most $10$ vertices of odd degree.
\end{lemma}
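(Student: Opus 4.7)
The plan is to mimic the edge-counting argument of Lemma \ref{6k+2rest}, using two numerical inputs: the extremal value of $c(F)$ and the fact that the $M$-associated graph has exactly $12$ odd-degree vertices (one per pentagon of $F$, by Lemma \ref{prem}).

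First I would compute $|M|$. By Theorem \ref{new}, extremality with $n \equiv 2 \pmod 6$ means $c(F) = \lfloor n/6 \rfloor - 3$; writing $n = 6k+2$, the Clar set $\mathcal{H}$ covers $6(k-3)$ vertices of $F$, and $M$ is a perfect matching of the remaining $n - 6(k-3) = 20$ vertices, so $|M| = 10$. Consequently the $M$-associated graph has exactly $10$ edges.

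Next I would prove the component bound by contradiction. Suppose some component $G_0$ contains more than $10$ odd-degree vertices. Since the total count is $12$ and the number of odd-degree vertices in any graph is even, $G_0$ must contain all $12$ of them. By Lemma \ref{simple} the $M$-associated graph is simple, so $G_0$ is a simple connected graph on at least $12$ distinct vertices; this forces $|E(G_0)| \geq |V(G_0)| - 1 \geq 11$. But the whole $M$-associated graph has only $10$ edges, contradicting $|E(G_0)| \leq |M| = 10$.

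The whole proof is essentially bookkeeping, and I do not expect any serious obstacle. The one subtle point, exactly as in the proof of Lemma \ref{6k+2rest}, is to invoke simplicity (Lemma \ref{simple}) together with connectedness of $G_0$ in order to upgrade the degree-sum lower bound $|E(G_0)| \geq k/2 = 6$ to the tree lower bound $|E(G_0)| \geq |V(G_0)| - 1 \geq 11$, which is what actually beats the edge budget inherited from $|M| = 10$.
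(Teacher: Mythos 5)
Your proof is correct and follows exactly the edge-counting route the paper intends (the paper omits the proof of this lemma, citing "a similar argument as in Lemma \ref{6k+2rest}"): $|M|=10$ from $c(F)=\lfloor n/6\rfloor-3$, and a component with more than $10$ odd-degree vertices must (by parity) contain all $12$, hence spans at least $12$ vertices and, being connected and simple, needs at least $11$ edges, exceeding the budget of $10$. No gaps.
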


Note that there are a number of candidates for possible components of the $M$-associated graph satisfying the constraint in Lemma \ref{2pen}.
The following lemma reduces the number of candidates for possible components of the $M$-associated graph considerable.

\begin{lemma}\label{2p5}
Let $F$ be an extremal fullerene with $n\equiv 2\pmod 6$
vertices and $(\mathcal{H},M)$ a Clar structure of $F$.  Then
 the $M$-associated graph does not contain $P_5$ as subgraph.
\end{lemma}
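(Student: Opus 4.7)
I argue by contradiction: suppose the $M$-associated graph contains a subgraph $P_5 = v_1v_2v_3v_4v_5$, and let $e_i \in M$ denote the matching edge corresponding to the path edge $v_iv_{i+1}$. For each $i$, let $p_i, q_i$ be the two faces of $F$ sharing $e_i$; these are necessarily distinct from $v_i, v_{i+1}$, since $e_i$ exits both of those faces.

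My plan is to transfer the configuration to the parent graph $G := \mathcal{L}^{-1}(\mathscr{E}(\mathcal{H},M))$. Each matching edge $e_i$ corresponds to a quadrilateral face $Q_i$ of $G$ whose four boundary vertices (in cyclic order) are $v_i, p_i, v_{i+1}, q_i$, with $v_i$ and $v_{i+1}$ appearing as opposite corners. Because $G$ is bipartite by Corollary \ref{parent}(1), the two pairs of opposite vertices of any $4$-cycle share a bipartition class. In particular $v_i$ and $v_{i+1}$ always lie in the same class, so iterating yields that $v_1, v_2, v_3, v_4, v_5$ all lie in a common bipartition class, which I will call $A$.

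The contradiction now comes from a short modular count. Writing $n=6k+2$, Lemma \ref{2pen} gives $|M|=10$, and $|\mathcal{H}|=c(F)=k-3$. Euler's formula applied to $G$ (whose faces are the $k-3$ hexagons of $\mathcal{H}$ and the $10$ quadrilaterals from $M$) yields $|V(G)|=2k+6$ and $|E(G)|=3k+11$. Because every vertex of $G$ has degree at least $3$ by Corollary \ref{parent}(3), the total excess is $\sum_v (\deg_G v - 3) = 2|E(G)| - 3|V(G)| = 4$. The interior vertices $v_2, v_3, v_4$ of $P_5$ each have $M$-associated degree $\geq 2$ and so parent-graph degree $\geq 4$ by the correspondence in Figure \ref{fig.14}, already using at least $3$ of the available $4$ units of excess. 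Thus at most one extra degree-$4$ vertex can exist elsewhere in $G$ (and no vertex of higher degree). Now applying the bipartite identity $\sum_{v \in A} \deg_G v = |E(G)| = 3k+11 \equiv 2 \pmod 3$, a short enumeration shows that the contribution of $v_1, \ldots, v_5$ to $\sum_{v \in A} \deg_G v$ is always $18$ or $19$; combined with either $0$ or $4$ from the optional extra degree-$4$ vertex (depending on whether it lies in $A$ or $B$) and multiples of $3$ from the remaining degree-$3$ vertices, $\sum_{v \in A} \deg_G v$ is always congruent to $0$ or $1$ modulo $3$, never $2$. The main obstacle is simply verifying this finite case analysis, which closes uniformly because $11 \not\equiv 0 \pmod 3$.
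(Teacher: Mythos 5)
Your proof is correct, but it takes a genuinely different and more uniform route than the paper's. The paper first pins down the component $G$ containing the $P_5$ (Claim 1 bounds its edge count between $s+3$ and $s+4$ in terms of its $2s$ odd-degree vertices), then exhaustively enumerates all such components in Figures \ref{fig.6} and \ref{fig.8} and, for each one separately, reads off the degree-$4$/degree-$5$ vertices of $\mathcal{L}^{-1}(\mathscr{E}(\mathcal{H},M))$ and derives the contradiction $|\nabla(A)|\not\equiv|\nabla(B)|\pmod 3$. You bypass the enumeration entirely: your observation that consecutive path vertices sit as opposite corners of a quadrilateral face of the bipartite parent, hence in the same partite class, is exactly the fact the paper verifies graph-by-graph, but you establish it once for the whole path; and your Euler count $2|E|-3|V|=4$ replaces Claim 1 and the figures by showing that the three internal path vertices already absorb at least $3$ of the $4$ available units of degree excess, which collapses everything into the two cases ``contribution $18$ or $19$.'' The underlying obstruction is identical in both proofs ($|E(G)|=3k+11\equiv 2\pmod 3$ while the degree sum over one side is $\equiv 0$ or $1$), so what your argument buys is the elimination of roughly fifty cases. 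Two small points of hygiene: your parenthetical ``and no vertex of higher degree'' is not literally true, since the fourth unit of excess may land on one of $v_2,v_3,v_4$ and make it a degree-$5$ vertex --- but your subsequent $18$-or-$19$ tally already covers this, so nothing breaks; and the assertion that $Q_i$ is a quadrilateral face of the parent with $v_i,v_{i+1}$ opposite deserves one explicit sentence (the four faces of $F$ around the edge $e_i$ are pairwise distinct by $3$-connectivity and Lemma \ref{simple}, none lies in $\mathcal{H}$ since they all meet an $M$-covered vertex, and the exited faces sit across from each other in the cyclic order around the widened edge), though this is consistent with the structure the paper records in Figure \ref{fig.14} and Corollary \ref{parent}.
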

\begin{proof}
It suffices to show that there is no component of the $M$-associated graph containing $P_5$ as subgraph.
Suppose to the contrary that there is a component $G$ of the $M$-associated graph
containing $P_5$ as subgraph. Suppose $G$ contains $2s$
vertices of odd degree. Then by Lemma \ref{2pen}, we have $1\leq s\leq 5.$

\noindent{\bf Claim 1.} $G$ has at least
$s+3$ edges and at most $s+4$ edges.

For
 the case $s=1$,  $G$ is a path.
Each path containing $P_5$ as its subgraph  has at least  $s+3$ edges. Furthermore, every graph which
 has exactly $2(s+1)$ odd-degree vertices and  contains $P_5$ as its subgraph can be obtained
 from a graph which has exactly $2s$  vertices of odd degree
 and  contains $P_5$ as its subgraph
 in terms of one of the following two operations:
(1) Add an edge between two  vertices of even degree; (2) Add a new
vertex and add an edge between this new vertex and a
vertex of odd degree. For each operation, at least one edge is added. Thus the
associated graph of $G$ has at least $s+3$ edges.

On the other hand, suppose  $G$ has more than
$s+4$ edges. Then the components of the $M$-associated graph except $G$
have less than $6-s$ edges since $|M|=10$. But the components of the $M$-associated graph
except $G$  have exactly $12-2s$ vertices of odd degree, and thus have
at least $6-s$ edges, a contradiction. This proves Claim 1.



By Claim 1£¬ there are two cases to be considered:

Case 1.  $G$ has $s+3$ edges.

\begin{figure}[tphb]
    \begin{center}
\includegraphics{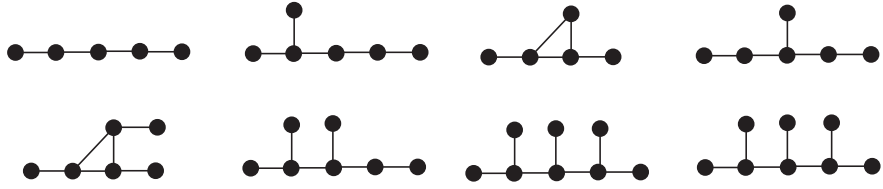}
\end{center}
   \caption{All possible $G$ with $s+3$ edges.}
\protect\label{fig.6}
\end{figure}

\begin{figure}
    \begin{center}
\includegraphics{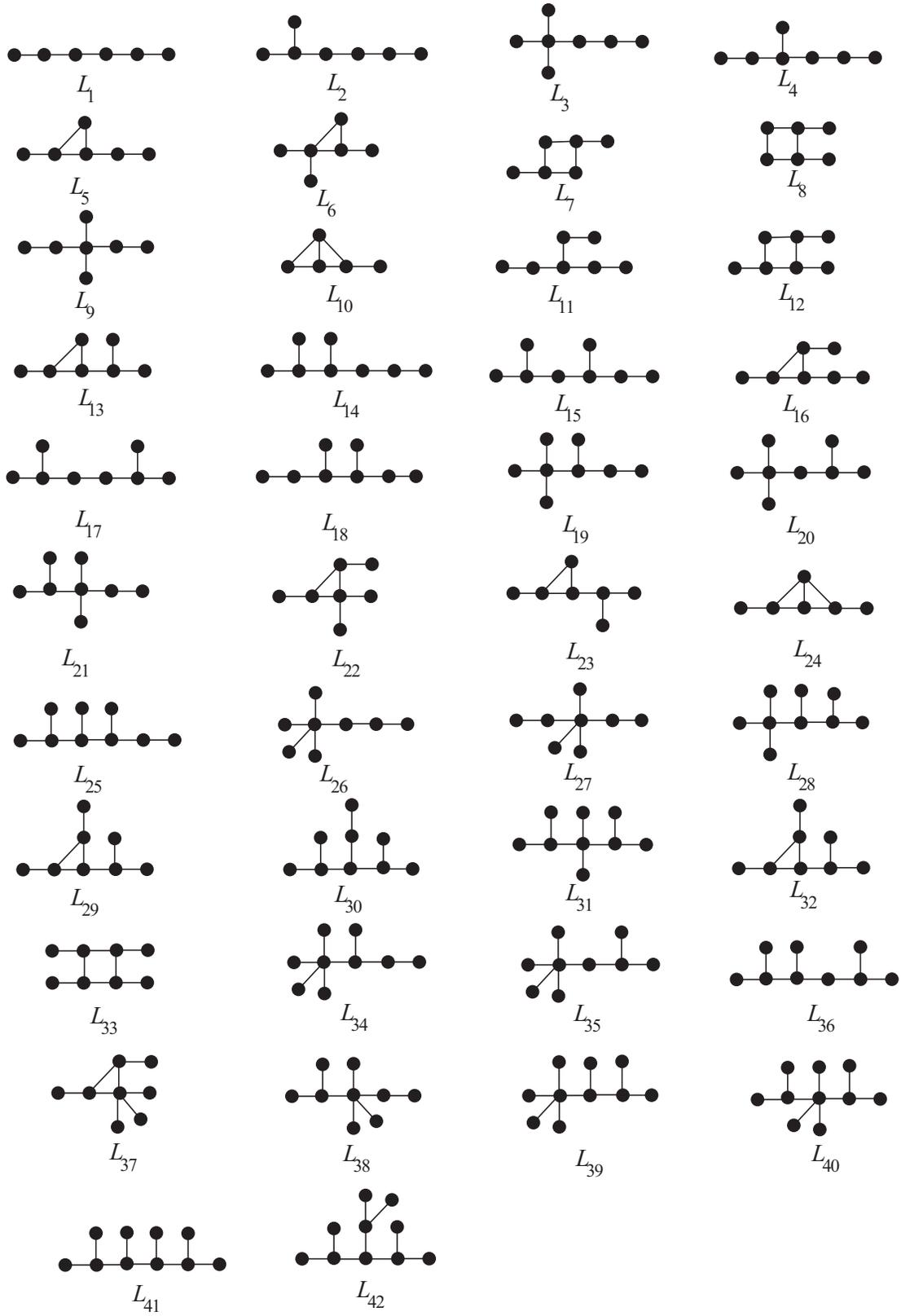}
\end{center}
   \caption{All possible $G$ with $s+4$ edges.}
\protect\label{fig.8}
\end{figure}

All possible  $G$'s are illustrated in Figure
\ref{fig.6}. In each case,
$\mathcal{L}^{-1}(\mathscr{E}(\mathcal{H},M))$ has three vertices of degree $4$
corresponding to three non-1-degree vertices of  $G$.
These three vertices of degree $4$ are contained in the same partite set
of $\mathcal{L}^{-1}(\mathscr{E}(\mathcal{H},M))$. The components of the $M$-associated graph except $G$ have $7-s$ edges and exactly $12-2s$
vertices of odd degree. By a similar discussion  as in
Section 3, there is exactly a new
vertex of degree 4 in $\mathcal{L}^{-1}(\mathscr{E}(\mathcal{H},M))$
except the three vertices of degree $4$ we already have. Since the $\mathcal{L}^{-1}(\mathscr{E}(\mathcal{H},M))$
 is bipartite and connected, we may assume
 $\mathcal{L}^{-1}(\mathscr{E}(\mathcal{H},M))=(A,B)$. Without loss
of generality, we may assume that this new vertex of degree 4 is
contained in $A$. Hence $|\nabla(A)|\equiv1 \pmod 3$, but
$|\nabla(B)|\equiv0 \pmod 3$, where $\nabla(S)$ denotes the set of edges going from $S$ to $V(G)-S$, a contradiction.

Case 2.  $G$ has $s+4$ edges.

All possible $G$'s are illustrated in Figure
\ref{fig.8}. The components of the $M$-associated graph except $G$ have
$6-s$ edges. But the components of the $M$-associated graph except $G$
have exactly $12-2s$ vertices of odd degree. Thus the components of the $M$-associated graph except $G$
are all $P_2$'s.
Hence
there are no vertices of degrees $4$, $5$ and $6$ in $\mathcal{L}^{-1}(\mathscr{E}(\mathcal{H},M))$
except those correspond to $G$. If
$G$ is one of the graphs in $L_1$, $L_2$, $L_4$, $L_5$, $L_7$, $L_8, L_{10},
L_{11}, L_{12}, L_{13}, L_{14}, L_{15}$, $L_{16}$, $L_{17}$, $L_{18}$,
$L_{23}$, $L_{24},$ $L_{25}, L_{29}, L_{30}$, $L_{32}$, $L_{33}$, $L_{36}$,
$L_{41}$ and $L_{42}$, then $\mathcal{L}^{-1}(\mathscr{E}(\mathcal{H},M))$ has exactly four vertices of degree $4$.
These four vertices of degree $4$ are contained in the same partite set
of $\mathcal{L}^{-1}(\mathscr{E}(\mathcal{H},M))$.  We may  assume
$\mathcal{L}^{-1}(\mathscr{E}(\mathcal{H},M))=(A,B)$. Without loss
of generality, we may also assume that these four vertices of degree $4$ are
contained in $A$. Then $|\nabla(A)|\equiv1 \pmod 3$, but
$|\nabla(B)|\equiv0 \pmod 3$, a contradiction. Otherwise, $\mathcal{L}^{-1}(\mathscr{E}(\mathcal{H},M))$ has exactly two
vertices of degree $4$ and one vertex of degree $5$. These two 4-degree
vertices and one vertex of degree $5$ are contained in the same partite
set of $\mathcal{L}^{-1}(\mathscr{E}(\mathcal{H},M))$, say $A$. Then
$|\nabla(A)|\equiv1 \pmod 3$, but $|\nabla(B)|\equiv0 \pmod 3$, a
contradiction.
\end{proof}

By Lemma \ref{2p5}, all possible components of the $M$-associated graph are enumerated in Figure $\ref{fig.5}$.
Suppose the $M$-associated graph has $n_i$ copies of $N_i$ as its components, and $N_i$
has $l_i$ edges  and $k_i$  vertices of odd degree. We have a
system of linear indeterminate  equations:
\begin{equation}\label{equ4}
\left\{\begin{array}{ll}
\sum_{i=1}^{22}n_il_i=10 ;\\[2ex]
\sum_{i=1}^{22}n_ik_i=12.
\end{array}
\right.
\end{equation}
Solving it, we have the
following 45 solutions:
(1) $n_{22}=1,n_3=1, n_i=0, i\neq3,22$;
(2) $n_{21}=1,n_3=2, n_i=0, i\neq3,21$;
(3) $n_{20}=1,n_3=2, n_i=0, i\neq3,20$;
(4) $n_{19}=1,n_9=1, n_i=0, i\neq9,19$;
(5) $n_{19}=1,n_4=1, n_3=1, n_i=0, i\neq3,4,19$;
(6) $n_{18}=1,n_3=3, n_i=0, i\neq3,18$;
(7) $n_{17}=1,n_9=1, n_3=1, n_i=0, i\neq3,9,17$;
(8) $n_{17}=1,n_4=1, n_3=2, n_i=0, i\neq3,4,17$;
(9) $n_{16}=1,n_9=1, n_3=1, n_i=0, i\neq3,9,16$;
(10) $n_{16}=1,n_4=1, n_3=2, n_i=0, i\neq3,4,16$;
(11) $n_{15}=2, n_i=0, i\neq15$;
(12) $n_{15}=1,n_{11}=1, n_3=1, n_i=0, i\neq3,11,15$;
(13) $n_{15}=1,n_{10}=1, n_3=1, n_i=0, i\neq3,10,15$;
(14) $n_{15}=1,n_9=1, n_4=1, n_i=0, i\neq4,9,15$;
(15) $n_{15}=1,n_5=1, n_3=2, n_i=0, i\neq3,5,15$;
(16) $n_{15}=1,n_4=2, n_3=1, n_i=0, i\neq3,4,15$;
(17) $n_{14}=1,n_3=4, n_i=0, i\neq3,14$;
(18) $n_{13}=1,n_3=4, n_i=0, i\neq3,14$;
(19) $n_{12}=1,n_9=1, n_3=2, n_i=0, i\neq3,9,12$;
(20) $n_{12}=1,n_4=1, n_3=3, n_i=0, i\neq3,4,12$;
(21) $n_{11}=2, n_3=2, n_i=0, i\neq3,11$;
(22) $n_{11}=1,n_{10}=1, n_3=2, n_i=0, i\neq3,10,11$;
(23) $n_{11}=1,n_9=2, n_i=0, i\neq9,11$;
(24) $n_{11}=1,n_9=1, n_4=1, n_3=1, n_i=0, i\neq3,4,9,11$;
(25) $n_{11}=1, n_5=1, n_3=3, n_i=0, i\neq3,5,11$;
(26) $n_{11}=1, n_4=2, n_3=2, n_i=0, i\neq3,4,11$;
(27) $n_{10}=2, n_3=2, n_i=0, i\neq3,10$;
(28) $n_{10}=1, n_9=2, n_i=0, i\neq9,10$;
(29) $n_{10}=1, n_9=1, n_4=1, n_3=1, n_i=0, i\neq3,4,9,10$;
(30) $n_{10}=1, n_5=1, n_3=3, n_i=0, i\neq3,5,10$;
(31) $n_{10}=1, n_4=2, n_3=2, n_i=0, i\neq3,4,10$;
(32) $n_9=2, n_5=1, n_3=1, n_i=0, i\neq3,5,9$;
(33) $n_9=2, n_4=2, n_i=0, i\neq4,9$;
(34) $n_9=1, n_6=1, n_3=3, n_i=0, i\neq3,6,9$;
(35) $n_9=1, n_5=1, n_4=1, n_3=2, n_i=0, i\neq3,4,5,9$;
(36) $n_9=1, n_4=3, n_3=1, n_i=0, i\neq3,4,9$;
(37) $n_8=1, n_3=5, n_i=0, i\neq3,8$;
(38) $n_7=1, n_3=5, n_i=0, i\neq3,7$;
(39) $n_6=1, n_4=1, n_3=4, n_i=0, i\neq3,4,6$;
(40) $n_5=2, n_3=4, n_i=0, i\neq3,5$;
(41) $n_5=1, n_4=2, n_3=1, n_i=0, i\neq3,4,5$;
(42) $n_4=4, n_3=2, n_i=0, i\neq3,4$
(43) $n_2=1, n_3=6, n_i=0, i\neq1,8$;
(44) $n_1=1, n_9=1,n_3=4, n_i=0, i\neq1,3,9$;
(45) $n_1=1, n_2=1,n_3=5, n_i=0, i\neq1,2,3$.

According to the above solutions, we can determine the structure of $\mathcal{L}^{-1}(\mathscr{E}(\mathcal{H},M))$.

\begin{figure}[tphb]
    \begin{center}
\includegraphics{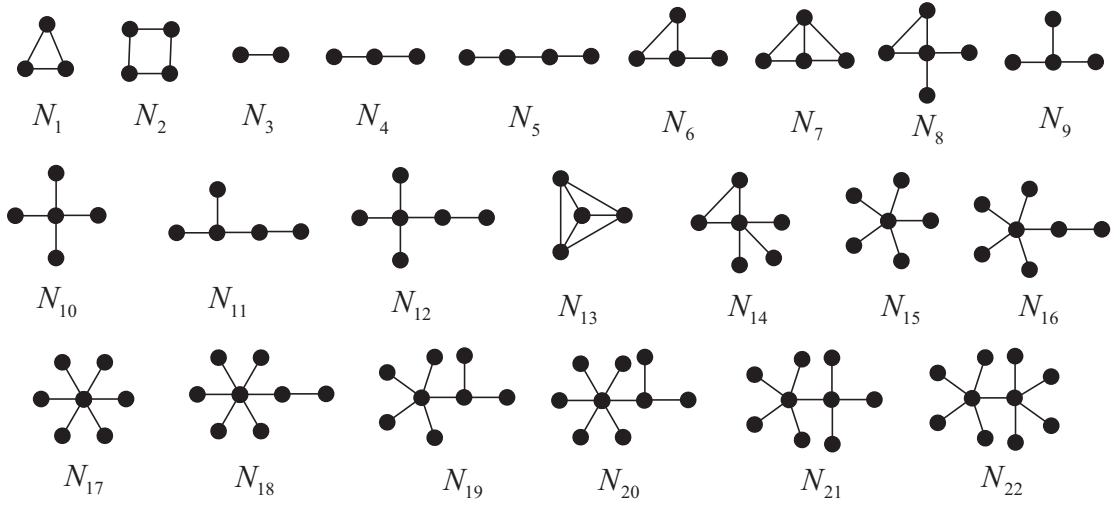}
\end{center}
   \caption{All possible components of
the $M$-associated graph of an extremal fullerene with $n\equiv 2\pmod 6$
vertices.}
\protect\label{fig.5}
\end{figure}

\begin{thm}\label{6k+2}
Let $F$ be an extremal fullerene with $n\equiv 2\pmod 6$
vertices and $(\mathcal{H},M)$ a  Clar structure of $F$.  Then
 $\mathcal{L}^{-1}(\mathscr{E}(\mathcal{H},M))$ is a plane graph with
$(n+16)/{3}$ vertices consisting of exactly $10$ quadrilaterals and
the other hexagons from the following three classes of graphs.
\begin{description}
\item $(1)$ Plane graphs satisfy: (i)  Each has exactly two vertices of degree $5$ and the other
vertices of degree $3$,  (ii) Each vertex of degree $5$ is incident with at least
$4$ quadrilaterals, and (iii) Each partite set  contains one
vertex of degree $5$.
\item $(2)$ Plane graphs satisfy: (i)  Each has exactly two
vertices of degree $4$, one vertex of degree $5$ and the other vertices of degree $3$,  (ii)
The unique vertex of degree $5$ is incident with at least $4$
quadrilaterals, and each vertex of degree $4$ is incident with at
least $2$ quadrilaterals, and (iii) One partite set  contains
two vertices of degree $4$ and the other partite set  contains one
vertex of degree $5$.
\item $(3)$ Plane graphs satisfy: (i)  Each has exactly four
vertices of degree $4$ and the other vertices of degree $3$,  (ii) Each vertex of degree
$4$ is incident with at least $2$ quadrilaterals, and (iii) Each
partite set  contains two vertices of degree $4$.
\end{description}
\end{thm}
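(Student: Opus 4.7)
The plan is to parallel the proof of Theorem~\ref{6k+4} using the full list of $45$ solutions of~(\ref{equ4}), with a bipartite parity obstruction doing the heavy lifting to reduce to the three classes described.

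First I would pin down the basic parameters of $\mathcal{L}^{-1}(\mathscr{E}(\mathcal{H},M))$. By Lemma~\ref{2pen} we have $|M|=10$ and $|\mathcal{H}|=c(F)=(n-20)/6$, so by Corollary~\ref{parent} the parent graph is connected, plane, bipartite, with faces consisting exactly of the $10$ quadrilaterals coming from $M$ and the $(n-20)/6$ hexagons coming from $\mathcal{H}$. A routine Euler's-formula count using $2|E|=4\cdot 10+6\cdot(n-20)/6=n+20$ gives $|V|=(n+16)/3$, confirming the vertex count in the theorem.

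Next, for each of the $45$ solutions of~(\ref{equ4}) I would use the degree correspondence explained after Corollary~\ref{parent} and illustrated in Figure~\ref{fig.14} (degree $1\to 3$; degrees $\{2,3\}\to 4$; degrees $\{4,5\}\to 5$; degree $6\to 6$; every parent vertex not corresponding to any vertex of the $M$-associated graph has degree $3$) to read off the triple $(D_4,D_5,D_6)$ counting vertices of the parent graph having degree $4$, $5$, and $6$. The decisive tool is then the same bipartite parity argument used in the proof of Theorem~\ref{6k+2nonexist}. Writing $\mathcal{L}^{-1}(\mathscr{E}(\mathcal{H},M))=(A,B)$, the identity $\sum_{v\in A}\deg v=\sum_{v\in B}\deg v$ reduces, after subtracting $3|A|$ and $3|B|$ from the two sides, to
\[
D_4^A+2D_5^A\equiv D_4^B+2D_5^B\pmod{3},
\]
since the degree-$3$ and degree-$6$ contributions vanish modulo $3$ (here $D_k^X$ denotes the number of degree-$k$ vertices of the parent graph lying in part $X$). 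Checking this congruence against each of the $45$ solutions rules out every triple $(D_4,D_5,D_6)$ except $(0,2,0)$, $(2,1,0)$, and $(4,0,0)$, matching classes $(1)$, $(2)$, $(3)$ of the theorem; moreover the unique compatible bipartition in each surviving case gives condition~(iii) verbatim: $D_5^A=D_5^B=1$ for class $(1)$; $\{D_4^A,D_4^B\}=\{2,0\}$ with the lone degree-$5$ vertex on the opposite side for class $(2)$; and $D_4^A=D_4^B=2$ for class $(3)$.

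Finally, for the incidence condition~(ii) in each class, I would observe that the quadrilateral faces of the parent graph incident to a vertex $v_f$ are in bijection with the $M$-edges of $F$ sharing at least one vertex with the face $f$ attached to $v_f$, and this number equals $x_f+y_f$, where $x_f$ counts $M$-edges on $\partial f$ and $y_f$ counts $M$-edges exiting $f$. Figure~\ref{fig.14} gives $y_f\in\{4,5\}$ for a degree-$5$ parent vertex and $y_f\in\{2,3\}$ for a degree-$4$ parent vertex, so $x_f+y_f\geq 4$ and $x_f+y_f\geq 2$ respectively, as required. The main obstacle will be the bookkeeping in the $45$-case analysis: most cases are killed in one line by the mod-$3$ congruence, but a handful of mixed high-degree distributions that formally satisfy the congruence will require a finer structural argument, analogous to the $|\nabla(A)|\not\equiv|\nabla(B)|\pmod{3}$ step at the end of the proof of Lemma~\ref{2p5}, to confirm that no spurious fourth class slips through.
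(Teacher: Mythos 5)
Your proposal follows essentially the same route as the paper's proof: read off the degree profile $(D_4,D_5,D_6)$ of $\mathcal{L}^{-1}(\mathscr{E}(\mathcal{H},M))$ from each of the $45$ solutions of~(\ref{equ4}) via the Figure~\ref{fig.14} correspondence, then use the bipartite degree-sum congruence modulo $3$ (your $D_4^A+2D_5^A\equiv D_4^B+2D_5^B\pmod 3$ is exactly the paper's $|\nabla(A)|\equiv|\nabla(B)|\pmod 3$ argument in Claims 1--4) to kill the profile $(1,0,1)$ and force the stated bipartition in the three surviving profiles. Your closing worry about needing a ``finer structural argument'' is unfounded --- the congruence alone disposes of every case --- and your explicit count $x_f+y_f$ for condition (ii) is if anything slightly more detailed than the paper's appeal to the degree correspondence.
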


\begin{proof}
Consider all the 45  possible $M$-associated graphs corresponding to all the 45 solutions of the
system of linear indeterminate  equations (\ref{equ4}).
 For each possible $M$-associated graph, $\mathcal{L}^{-1}(\mathscr{E}(\mathcal{H},M))$
consists exactly 10 quadrilaterals and the other  hexagons.  For each possible $M$-associated graph
 corresponding to Solutions (3), (6), (7) and (8) of the
system of linear indeterminate  equations (\ref{equ4}),
 $\mathcal{L}^{-1}(\mathscr{E}(\mathcal{H},M))$ has
 exactly one  vertex of degree $6$, one vertex of degree 4 and the other vertices of degree 3.
 For each possible $M$-associated graph
 corresponding to Solutions (1), (2), (11), (13) and (27) of the
system of linear indeterminate  equations (\ref{equ4}),
  $\mathcal{L}^{-1}(\mathscr{E}(\mathcal{H},M))$  has
 exactly two vertices of degree $5$ and other vertices of degree 3.
For each possible $M$-associated graph
 corresponding to Solutions (4), (5), (9), (10), (12), (14), (15), (16), (17), (19), (20), (22), (28), (29), (30), (31) and (37)
  of the
system of linear indeterminate  equations (\ref{equ4}),
$\mathcal{L}^{-1}(\mathscr{E}(\mathcal{H},M))$ has
 exactly one vertex of degree $5$, two vertices of degree $4$ and the other vertices of degree 3.
For each possible $M$-associated graph
 corresponding to the other solutions of the
system of linear indeterminate  equations (\ref{equ4}),
  $\mathcal{L}^{-1}(\mathscr{E}(\mathcal{H},M))$  has exactly
 four
 vertices of degree $4$ and the other vertices of degree 3.

We  proceed to conclude this theorem by proving the following four
claims. Since all graphs discussed in these claims are connected plane graphs having
 only faces of even degree, such graphs are bipartite and have the unique bipartition $(A,B)$.

\noindent{\bf Claim 1.} There is no plane graph satisfying (1) It has exactly one
 vertex of degree 6, one  vertex of degree 4  and the other vertices of degree $3$, and (2) It consists  exactly 10
quadrilaterals and the other  hexagons.

Suppose $G=(A,B)$ is a desired plane graph.
Without loss of generality, we may assume that the unique 4-degree
vertex is contained in $A$. Then $|\nabla(A)|\equiv1 \pmod 3$, but
$|\nabla(B)|\equiv0 \pmod 3$,  a contradiction. This proves Claim 1.

\noindent{\bf Claim 2.} Suppose $G$ is a plane graph satisfying (1) It has
two vertices of degree $5$ and the other vertices of degree 3, and (2) It consists exactly 10 quadrilaterals and the other  hexagons. Then each partite set of $G$
contains one vertex of degree $5$.

  Suppose to the contrary that two 5-degree
vertices are contained in the same partite set, say $A$. Then
$|\nabla(A)|\equiv1 \pmod 3$, but $|\nabla(B)|\equiv0 \pmod 3$, a
contradiction. This proves Claim 2.

\noindent{\bf Claim 3.} Suppose $G$ is a plane graph satisfying (1) It has  exactly two
vertices of degree $4$, one vertex of degree $5$ and the other vertices of degree 3, and
(2) It consists exactly 10 quadrilaterals and the other  hexagons. Then one partite set
of $G$ contains two vertices of degree $4$ and the other partite set of
$G$ contains one vertex of degree $5$.

 Suppose to the contrary that each partite
set of $G$ contains one vertex of degree 4. Without loss of generality,
we may assume that the unique vertex of degree $5$ is contained in $A$.
Then $|\nabla(A)|\equiv0 \pmod 3$, but $|\nabla(B)|\equiv1 \pmod 3$,
a contradiction. This proves Claim 3.

\noindent{\bf Claim 4.} Suppose $G$ is a plane graph satisfying (1) It has  exactly 4
vertices of degree $4$ and the other vertices of degree 3, and (2) It consists exactly 10
quadrilaterals and the other  hexagons. Then each partite set of $G$
contains two vertices of degree 4.

Suppose to the contrary that one partite
set of $G$, say $A$, contains one vertex of degree 4 and the other partite
set $B$ contains
three vertices of degree 4 or one partite set of $G$, say $A$, contains
no vertex of degree 4 and the other partite
set $B$ contains four vertices of degree 4. If $A$
contains one vertex of degree 4 and $B$ contains three 4-degree
vertices, then $|\nabla(A)|\equiv1 \pmod 3$, but $|\nabla(B)|\equiv0
\pmod 3$, a contradiction. If $A$ contains no vertex of degree 4 and
$B$ contains four vertices of degree 4, then $|\nabla(A)|\equiv0 \pmod
3$, but $|\nabla(B)|\equiv1 \pmod 3$, a contradiction. This proves
Claim 4.
\end{proof}

By the proof of Theorem \ref{6k+2}, for an extremal fullerene $F$ with $n\equiv 2\pmod 6$ vertices and a Clar structure $(\mathcal{H},M)$ of  $F$, the
$M$-associated graph is one of the graphs corresponding to the Solutions (11), (12), (13), (14), (15), (16), (21), (22), (23),
(24), (25), (26), (27), (28), (29), (30), (31),  (32), (33), (35), (36), (40), (41) and (42) of the
system of linear indeterminate  equations (\ref{equ4}). Hence only $N_3$, $N_4$, $N_5$, $N_9$, $N_{10}$, $N_{11}$ and $N_{12}$ may serve as components of the $M$-associated graph.

Let $G$ be a plane graph from the three classes of graphs in Theorem
\ref{6k+2}.
 We want to determine the conditions under which $G$ can be
 changed into an extremal fullerene.
It can be easily seen that $\mathcal {L}(G)$ is a trivalent plane graph
satisfying $(i')$ It consists 10 quadrilaterals, and  the others decagons, octagons  and
 hexagons,
  $(ii')$ All quadrilaterals lie in the same perfect Clar structure, and
  $(iii')$ Each decagon is adjacent to at least four quadrilaterals,
  and each octagon is adjacent to at least two quadrilaterals.
  For each quadrilateral, we select a pair of opposite edges and contract
  it into an edge. Clearly, the resulting graph is trivalent.  Analogous to the discussion immediately before Theorem \ref{6k+4ch}, in order to make the resulting graph  be a fullerene,
we should select four or five pair of opposite edges exiting
any octagon and two or three pair of opposite edges exiting  any
octagon.
 We
can contract $\mathcal {L}(G)$ into an extremal fullerene exactly
when a generalized  diagonalization of $G$ is possible. Together
with Theorem \ref{6k+2}, we have the following graph-theoretical characterization for
extremal fullerenes on $6k+2$ vertices.

\begin{thm}\label{6k+2ch}
 The extremal fullerenes with $n\equiv 2\pmod 6$ vertices are in
one-to-one correspondence with the generalized diagonalized plane
graphs with $(n+16)/3$ vertices from the three classes
graphs described in Theorem $\ref{6k+2}$.
\end{thm}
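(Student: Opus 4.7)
The plan is to establish a bijection by constructing mutually inverse maps, paralleling the proof of Theorem \ref{6k+4ch}. For the forward direction, given an extremal fullerene $F$ with $n \equiv 2 \pmod{6}$ vertices and a Clar structure $(\mathcal{H}, M)$, Theorem \ref{6k+2} already guarantees that $G := \mathcal{L}^{-1}(\mathscr{E}(\mathcal{H}, M))$ is a plane graph on $(n+16)/3$ vertices of one of the three required types. The generalized diagonalization on $G$ is read off from the $M$-associated graph: each edge of the $M$-associated graph joins two faces of $F$ linked by an edge of $M$, and inside the corresponding quadrilateral of $G$ this selects a pair of diagonal vertices. The degree-count restrictions catalogued in the list of admissible $M$-associated graph components following Theorem \ref{6k+2} translate precisely into the definition of a generalized diagonalization: each vertex of degree $4$ is chosen $2$ or $3$ times, each vertex of degree $5$ is chosen $4$ or $5$ times, and each vertex of degree $3$ is chosen at most once.

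For the reverse direction, starting with a generalized diagonalized plane graph $G$ from one of the three classes, form $\mathcal{L}(G)$. By the leapfrog formula, $\mathcal{L}(G)$ is trivalent with $10$ quadrilaterals, one decagon per degree-$5$ vertex of $G$, one octagon per degree-$4$ vertex of $G$, and the remaining faces hexagons. For each quadrilateral of $G$, the diagonalization singles out two diagonal vertices, which correspond to a pair of opposite edges in the matching quadrilateral of $\mathcal{L}(G)$; contract this pair as described in the paragraph preceding the theorem. The constraints of the generalized diagonalization then translate into: each decagon is shrunk by $4$ or $5$ to yield a hexagon or pentagon, each octagon is shrunk by $2$ or $3$ to yield a hexagon or pentagon, and each hexagon of $\mathcal{L}(G)$ is shrunk by at most $1$, staying a hexagon or becoming a pentagon. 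The resulting trivalent plane graph $F$ therefore consists solely of pentagons and hexagons and has $n$ vertices, so it is a fullerene. The surviving hexagons (images of the hexagons of $G$) form a resonant pattern with respect to the perfect matching $M$ consisting of the edges into which the $10$ quadrilaterals contract, and a direct count from the degree sequences of the three classes shows the pattern has size $\lfloor n/6 \rfloor - 3$; by Theorem \ref{new}, $F$ is extremal.

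The main obstacle, as in the $n \equiv 4 \pmod{6}$ case, is the verification that the two maps are mutually inverse. The key observation is that $\mathcal{L}$ and $\mathcal{L}^{-1}$ are inverse operations when restricted to the distinguished perfect Clar structure consisting of the $10$ quadrilaterals together with the hexagons of $G$. Thus, starting from $F$ with Clar structure $(\mathcal{H}, M)$, the forward map produces $G$ with its canonical perfect Clar structure and induced diagonalization, and the reverse map then rebuilds $\mathscr{E}(\mathcal{H}, M)$ and, by contracting each widened $M$-edge back to a vertex, recovers $(F, \mathcal{H}, M)$. Conversely, beginning with a diagonalized $G$, passing through $F$ and back yields $G$ with the same diagonalization, since the diagonalization encodes exactly which pair of opposite edges was contracted in each quadrilateral of $\mathcal{L}(G)$. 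This bookkeeping is routine but must be checked case by case against the three classes enumerated in Theorem \ref{6k+2}.
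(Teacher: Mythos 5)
Your proposal is correct and follows essentially the same route as the paper: the forward direction via Theorem \ref{6k+2} with the generalized diagonalization read off from the $M$-associated graph, and the reverse direction via the leapfrog $\mathcal{L}(G)$ followed by contracting the selected pairs of opposite edges in the quadrilaterals, with the degree constraints of the generalized diagonalization guaranteeing all faces shrink to pentagons or hexagons. Your explicit attention to verifying that the two maps are mutually inverse is a point the paper leaves implicit, but it is the same construction.
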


As  two examples, two generalized diagonalized plane graphs in
Figure \ref{fig.12} correspond to two experimentally produced
extremal fullerenes C$_{80}$:1 $(D_5d)$, C$_{80}$:2 $(D_2)$.

\begin{figure}[tphb]
    \begin{center}
\includegraphics{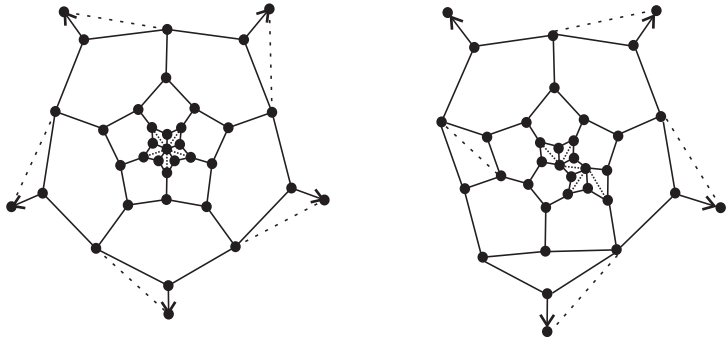}
\end{center}
   \caption{Two generalized diagonalized plane graphs.}
\protect\label{fig.12}
\end{figure}
\section{Conclusions and further directions}

Theorem \ref{6k+4ch} and Theorem \ref{6k+2ch}, together with Theorem \ref{ex6k},
form a complete graph-theoretical characterization
of extremal fullerenes. These results reduce the problem of constructing extremal fullerenes with $n$ vertices
to the problem of constructing some plane graphs with about $n/3$ vertices.

Up to our knowledge, there are only two experimentally produced fullernes, that is,  C$_{78}$:2 $(C_{2v})$ \cite{Diederich1991,Kikuchi1992} and  C$_{78}$:3 $(C_{2v})$ \cite{Kikuchi1992},
are not extremal. Furthermore, there is only one IPR fullerne isomer C$_{78}$:4 $(D_{3h})$, which is not experimentally produced, having larger Clar number than
these two experimentally produced fullernes (see Table \ref{tbl:1}).
However,  Fowler et al. \cite{Fowler1991} predicted that C$_{78}$:4 $(D_{3h})$ is most stable among its all isomers.
Thus Clar number performs well as a stability predictor for IPR fullerenes.
Carr et al. \cite{Carr2014} proved that the Clar number of a fullerene with $n$ vertices
is bounded below by $\lceil(n-380)/61\rceil$. It seems like
this bound can not be attained for any fullerene. In the future work, we will intend to look for an
improved lower bound for the Clar number of fullerenes whose
extremal classes correspond to those ``least stable'' fullerenes.

\begin{table}
  \caption{The Clar numbers of the IPR fullerene isomers with  78 atoms}
  \label{tbl:1}
  \begin{center}
  \begin{tabular}{cc}
    \hline
    Isomer  & Clar number  \\
    \hline
    C$_{78}:1$ $(D_{3})$  & 11   \\
    C$_{78}:2$ $(C_{2v})$ & 10  \\
   C$_{78}:3$ $(C_{2v})$ & 9 \\
   C$_{78}:4$ $(D_{3h})$& 11 \\
   C$_{78}:5 $ $(D_{3h})$& 8 \\
    \hline
  \end{tabular}
  \end{center}
\end{table}


\begin{thebibliography}{99}

\parskip -0.1mm
\bibitem{Clar1972}
E. Clar,~ {T}he {A}romatic {S}extet, Wiley, London, 1972.

\bibitem{Randic2003}
M. Randi\'{c},~ Aromaticity of polycyclic conjugated hydrocarbons, Chem.
  Rev. 103 (2003) 3449--3606.
\bibitem{King1993}
R.B. King,~ Applications of Graph Theory and Topology in Inorganic
  Cluster and Coordination Chemistry, {CRC} {P}ress, {B}oca {R}aton,
  1993.
\bibitem{Hansen1994}
P. Hansen,~M. Zheng, The Clar number of a benzenoid hydrocarbon and linear
  programming, J. Math. Chem. 15 (1994)  93--107.
\bibitem{Abeledo2007}
H. Abeledo,~G.W. Atkinson, Unimodularity of the Clar number problem,
  Lin. Algebra Appl. 420 (2007) 441--448.
\bibitem{Grunbaum1963}
B. Gr\"{u}nbaum, T. Motzkin,~The number of hexagons and the simplicity of
  geodesics on certain polyhedra, Can. J. Math. 15 (1963) 744--751.
\bibitem{Zhang2007}
H. Zhang,~D. Ye, An upper bound for Clar number of fullerene graphs, J.
  Math. Chem. 42 (2007)  123--133.

\bibitem{Hartung2013}
E. Hartung,~Fullerenes with complete Clar structure, Discr. Appl. Math.
 161 (2013)  2952--2957.
\bibitem{Kroto1985}
H. Kroto, J.R. Heath, S.C. O'brien, R.F. Curl, R.E. Smalley, C$_{60}$:
  Buckminsterfullerene, Nature 318 (1985)
  162--163.
\bibitem{Taylor1990}
R. Taylor, J. Hare,~ A.K. Abdul-Sada, H.W. Kroto, Isolation, separation and
  characterisation of the fullerenes C$_{60}$ and C$_{70}$: The third form of
  carbon, J. Chem. Soc. Chem. Comm.  (1990) 1423--1425.
\bibitem{Ettl1991}
R. Ettl, I. Chao, F. Diederich, R. Whetten, Isolation of $C_{76}$, a chiral
  ($D_2$) allotrope of carbon, Nature 353 (1991)
  149--153.
\bibitem{Taylor1993}
R. Taylor, G.J. Langley, A.G. Avent, T.J. Dennis, H.W. Kroto,
 D.R.M. Walton, $^{13}$C NMR spectroscopy of C$_{76}$, C$_{78}$, C$_{84}$
  and mixtures of C$_{86}$--C$_{102}$; Anomalous chromatographic behaviour of
  C$_{82}$, and evidence for C$_{70}$H$_{12}$, J. Chem. Soc. Perkin Tran.
  2 (1993) 1029--1036.
\bibitem{Diederich1991}
F. Diederich, R.L. Whetten, C. Thilgen, R. Ettl, I. Chao, M.M. Alvarez,
  Fullerene isomerism: Isolation of C$_{2v}$-C$_{78}$ and D$_3$-C$_{78}$,
  Science 254 (1991) 1768--1770.
\bibitem{Kikuchi1992}
K. Kikuchi, N. Nakdhara, T. Wakabayashi, S. Suzuki, H. Shiromaru,
  Y. Miyake,~K. Saito,~ I. Ikemoto,~M. Kainosho,~Y. Achiba,~ NMR
  characterization of isomers of C$_{78}$, C$_{82}$ and C$_{84}$ fullerenes,
  Nature 357 (1992) 142--145.

\bibitem{Manolopoulos1992}
D.E. Manolopoulos, P.W. Fowler, R. Taylor, H.W. Kroto, D.R.M. Walton,~ Faraday
  communications. An end to the search for the ground state of C$_{84}$?,
  J. Chem. Soc. Faraday Trans. 88 (1992)
  3117--3118.
\bibitem{Fowler1995}
P.W. Fowler, D.E. Manolopoulos,~ An Atlas of Fullerenes, Oxford
  University Press, Oxford, 1995.
\bibitem{Ye2009}
D. Ye, H. Zhang, Extremal fullerene graphs with the maximum Clar number,
  Discr. Appl. Math. 157 (2009)  3152--3173.
\bibitem{Zhang2010b}
H. Zhang, D. Ye, Y. Liu, A combination of Clar number and Kekul\'{e} count as
  an indicator of relative stability of fullerene isomers of $C_{60}$, J.
  Math. Chem. 48 (2010) 733--740.
\bibitem{Fowler1996}
P.W. Fowler, T. Heine, D. Mitchell, R. Schmidt, G. Seifert, Boron-nitrogen
  analogues of the fullerenes: the isolatedsquare rule, J. Chem. Soc.
  Faraday Trans. 92 (1996) 2197--2201.
\bibitem{Hennrich1996}
F.H. Hennrich, R. Michel, A. Fischer, S. Richard-Schneider, S. Gilb,
 M.M. Kappes, D.~Fuchs,~M. B\"{u}rk, K. Kobayashi,~S. Nagase, Isolation and
  characterization of C$_{80}$, Angew. Chem. Int. Ed. Engl.
  35 (1996) 1732--1734.
\bibitem{Wang2000}
C.-R. Wang, T. Sugai, T. Kai, T. Tomiyama, H. Shinohara, Production and
  isolation of an ellipsoidal C$_{80}$ fullerene, Chem. Commun.
  (2000) 557--558.

\bibitem{West2001}
D.B. West,~Introduction to Graph Theory, second ed., Prentice Hall, New
  Jersey, 2001.
\bibitem{Schwerdtfeger2013}
P. Schwerdtfeger, L. Wirz, J. Avery, Fullerene-A software package for
  constructing and analyzing structures of regular fullerenes, J. Comput.
  Chem. 34 (2013) 1508--1526.
\bibitem{Zhang1996}
H. Zhang, F. Zhang, The Clar covering polynomial of hexagonal systems I,
  Discr. Appl. Math. 69 (1996) 147--167.
\bibitem{Godsil2001}
C. Godsil, G. Royle,~ Algebraic Graph Theory, Springer-Verlag, New York,
  2001.
\bibitem{Pisanski2000}
T. Pisanski, M. Randi\'{c}, Bridges between geometry and graph theory, in: C.A. Gorini (Ed.), {G}eometry at {W}ork: {P}apers in
  {A}pplied {G}eometry, Vol. 53, Mathematical Association of America,
  Washington, DC, 2000,  pp. 174--194.
\bibitem{Fowler1987}
P.W. Fowler, J.I. Steer, The leapfrog principle: a rule for electron counts
  of carbon clusters, J. Chem. Soc., Chem. Commun. (1987)
  1403--1405.
\bibitem{Fowler1994}
P. Fowler, T. Pisanski, Leapfrog transformations and polyhedra of Clar type,
  J. Chem. Soc. Faraday Trans. 90 (1994)
  2865--2871.
\bibitem{Fowler1992}
P. Fowler, Localized models and leapfrog structures of fullerenes, J.
  Chem. Soc. Perkin Tran. 2 (1992) 145--146.
\bibitem{Zhang2001}
H. Zhang, F. Zhang, New lower bound on the number of perfect matchings in
  fullerene graphs, J. Math. Chem. 30 (2001)
  343--347.
\bibitem{Zhang2010a}
H. Zhang, D. Ye, W.C. Shiu, Forcing matching numbers of fullerene graphs,
  Discr. Appl. Math. 158 (2010) 573--582.

\bibitem{Maruic2007}
D. Maru\v{s}i\v{c}, Hamilton cycles and paths in fullerenes, J. Chem.
  Inf. Model. 47 (2007)  732--736.
\bibitem{Doslic2003}
T. Do\v{s}li\'{c}, Cyclical edge-connectivity of fullerene graphs and
  $(k,6)$-cages, J. Math. Chem. 33 (2003) 103--112.
\bibitem{Qi2008}
Z. Qi, H. Zhang, A note on the cyclic number of fullerene graphs, J.
  Math. Chem. 43 (2008) 134--140.
\bibitem{Kardos2008}
F. Kardo\v{s}, R. \v{S}krekovski, Cyclic edge-cuts in fullerene graphs,
  J. Math. Chem. 44 (2008) 121--132.
\bibitem{Saaty1977}
T. Saaty, P. Kainen, {T}he {F}our {C}olor {P}roblem, Assaults and
  Conquest, {M}cGraw-Hill {I}nternational {B}ook {C}ompany, {N}ew {Y}ork,
  1977.
\bibitem{Fowler1991}
P.W. Fowler,~ R.C. Batten, D.E. Manolopoulos, Faraday communications. The
  higher fullerences: a candidate for the structure of C$_{78}$, J. Chem.
  Soc. Faraday Trans. 87 (1991) 3103--3104.
\bibitem{Carr2014}
J.A. Carr, X. Wang, D. Ye, Packing resonant hexagons in fullerenes,
  Discrete Optimization 13 (2014) 49--54.
\end{thebibliography}
\end{document}